\numberwithin{equation}{section}
\newtheorem{theorem}{Theorem}[section]
\newtheorem{lemma}[theorem]{Lemma}
\newtheorem{remark}[theorem]{Remark}
\newenvironment{proof}[1][Proof]{\noindent\textbf{#1.} }{\hfill $\Box$}
\allowdisplaybreaks \numberwithin{equation}{section}
\makeatletter\setlength{\textwidth}{15.0cm}
\begin{document}

\author{Vladimir Georgiev{$^a$$^b$$^c$} ~Yuan Li{$^d$$^a$}\footnote{Corresponding author}~
\renewcommand\thefootnote{}
\footnote{{E-mail addresses}:  georgiev@dm.unipi.it, liyuan2014@lzu.edu.cn (Y. Li)}
\setcounter{footnote}{0}\\
 \small ${^a}$Dipartimento di Matematica, Universit\`{a} di Pisa, Largo B. Pontecorvo 5, 56100 Pisa, Italy
\\ \small ${^b}$ Faculty of Science and Engineering, Waseda University, 3-4-1, Okubo, Shinjuku-ku, \\ \small Tokyo 169-8555, Japan
\\ \small ${^c}$ IMI¨CBAS, Acad. Georgi Bonchev Str., Block 8, 1113 Sofia, Bulgaria
\\ \small ${^d}$ School of Mathematics and Statistics, Lanzhou
 University, Lanzhou, 730000, PR China}
\date{}
\title{\textbf{Blowup dynamics for Mass Critical Half-wave equation in 3D}}
\maketitle
\begin{abstract}
We consider the half-wave equation $i u_t=Du-|u|^{\frac{2}{3}}u$ in three dimension and in the mass critical. For initial data $u(t_0,x)=u_0(x)\in H^{1/2}_{rad}(\mathbb{R}^3)$ with radial symmetry, we construct a new class of minimal mass blowup solutions with the blow up rate $\|D^{\frac{1}{2}}u(t)\|_2\sim\frac{C(u_0)}{|t|^{\frac{1}{4}}}$ as $t\rightarrow0^-$.

\noindent
\textbf{Keywords:} Blow-up; Half-wave equation;  Mass critical; Minimal mass\\
\noindent
\textbf{Math. Subject Classification} 35Q55, 35B44, 35B40

\end{abstract}

\section{Introduction and Main Results}
\label{section:1}
In this paper, we consider the half-wave equation in three dimension
\begin{equation}\label{equ-1-hf-3D}
\begin{cases}
i\partial_tu=Du-|u|^{\frac{2}{3}}u,\\
u(t_0,x)=u_0(x),\ u:I\times\mathbb{R}^3\rightarrow\mathbb{C}.
\end{cases}
\end{equation}
Here, $I\subset\mathbb{R}$ is an interval containing the initial time $t_0\in\mathbb{R}$, and
\begin{align*}
\widehat{(Df)}(\xi)=|\xi|\hat{f}(\xi)
\end{align*}
denotes the first-order nonlocal fractional derivative.

Let us mention that nonlinear half-wave equation has recently attracted some attention in the area of dispersive nonlinear PDE. Evolution problems  like \eqref{equ-1-hf-3D} arise in various physical settings, which include equations range from turbulence phenomena, wave propagation, continuum limits of lattice system and models for gravitational collapse in astrophysics \cite{Ionescu2014,frank-lenzmann2013,eckhaus1983,majda1997,majda2001,Weinstein1987,k-lenzmann2013}. We also refer to \cite{Elgart2007,FJL2007,klein2014,cho2013} and the references therein for the background of the fractional Schr\"{o}dinger model in mathematics and physics.

For equation \eqref{equ-1-hf-3D}, the quantities of charge $M(u)$ and energy $E(u)$ given by
\begin{align}\label{mass-hf-3D}
\text{Mass}\ \ M(u)&=\int_{\mathbb{R}^3}|u(t,x)|^2dx,\\
\label{momentum-hf-3D}
\text{Momentum}\ \ P(u)&=\int-i\nabla u(t,x)\bar{u}(t,x)dx,\\\label{energy-hf-3D}
\text{Energy}\ \ E(u)&=\frac{1}{2}\int_{\mathbb{R}^3}\bar{u}(t,x)Du(t,x)dx
-\frac{3}{8}\int_{\mathbb{R}^3}|u(t,x)|^{\frac{8}{3}}dx
\end{align}
are conserved. The equation \eqref{equ-1-hf-3D} also has the following symmetry: \begin{align}\notag
u(t,x)\rightarrow\lambda_0^{\frac{3}{2}}u(\lambda_0 t-t_0,\lambda_0x-x_0)e^{i\theta},
\end{align}
for $\lambda_0>0$, $t_0\in\mathbb{R},~\theta\in\mathbb{R}$ and $x_0\in\mathbb{R}^3$.

The Cauchy problem \eqref{equ-1-hf-3D} is $L^2$-critical since the $L^2$-norm is invariant under the scaling rule $u_{\lambda}(x)=\lambda^{3/2}u(\lambda x)$:
\begin{align}\notag
\|u_{\lambda}\|_2=\|u\|_2,\ \text{for all}\ \lambda>0.
\end{align}
From \cite{BGV2018} we known that the Cauchy problem \eqref{equ-1-hf-3D} is locally well-posed in energy space $H^{1/2}(\mathbb{R}^3)$. More precisely, for all $u_0\in H^{1/2}(\mathbb{R}^3)$, there exists a unique solution $u(t)\in C([0,T);H^{1/2}(\mathbb{R}^3))$ to \eqref{equ-1-hf-3D}. Moreover, we have the blowup alternative that if $u(t)$ is the unique solution with its maximal time of existence $t_0<T\leq\infty$, then
\begin{align}\label{Intro-2-3D}
T<+\infty\  \text{implies}\ \lim_{t\rightarrow T^{-}}\|u(t)\|_{H^{1/2}}=+\infty.
\end{align}
A classical criterion of global-in-time existence for $H^{1/2}(\mathbb{R}^3)$ initial data is derived by using the Gagliardo-Nirenberg inequality with best constant
\begin{align}\notag
\|u\|_{L^{5/3}}^{\frac{5}{3}}\leq C_{opt}\|D^{\frac{1}{2}}u\|_2^2\|u\|_2^{\frac{2}{3}},\,\,\text{for}\,\, u\in H^{1/2}(\mathbb{R}^3),
\end{align}
where $C_{opt}=\frac{4}{3}\|Q\|_2^{-\frac{2}{3}}$ and $Q$ is the unique ground state solution to
\begin{equation}\label{equ-s=1-hf-3D}
D Q+Q=|Q|^{\frac{2}{3}}Q,\ Q(x)>0,\ Q(x)\in H^{1/2}(\mathbb{R}^3).
\end{equation}
Note that the existence of this equation follows from standard variational techniques, but the uniqueness of $Q$, which was obtained by Frank, Lenzmann and  Silvestre in \cite{FrankLS2016}. The mass and energy conservation  and the blowup criterion \eqref{Intro-2-3D} implies that initial data $u_0\in H^{1/2}(\mathbb{R}^3)$ with
$\|u_0\|_2<\|Q\|_2$
generate global-in-time solution. In addition,  the study of general half-wave equation attracted a great quantity of attentions; the topics cover over well-posedness, ill-posedness, traveling solitary waves, soliton waves, see \cite{BGLV2019,BGV,BGV2018,Choffrut-2018,Georgiev-Visciglia2016,Lenzmann-Raphael-2018,Li-Zhao-2020,Ozawa-Visciglia2016} and the references therein.

In this paper, we focus on the existence of nondispersive dynamics, we will describe example of such dynamics:

\textbf{Minimal mass blowup solution}.
There is no general criterion for blowup  solutions  for $L^2$-critical or $L^2$-supercritical half-wave equation in $\mathbb{R}^N$. This is still an open problem, which we can see \cite{lenzmann-2016blowup}. However, Krieger, Lenzmann and Rapha\"{e}l \cite{KLR2013} constructed a minimal mass blow-up solutions to the mass critical half-wave equation in one dimension. We also obtained the two-dimensional result, see \cite{Georgiev-Li}.  Now we state our main result.
\begin{theorem}(\textbf{Existence of minimal mass blowup elements})
For all $(E_0,P_0)\in \mathbb{R}_+^*\times\mathbb{R}^3$, there exists $t^*<0$ independent of $E_0$, $P_0$, and a radial minimal mass solution $u\in C^0([t^*,0);H^{1/2}(\mathbb{R}^3))$ of equation \eqref{equ-1-hf-3D} with
\begin{align}\notag
\|u(t)\|_2=\|Q\|_2,\ E(u(t))=E_0,\ P(u(t))=P_0,
\end{align}
which blow up at time $T=0$. More precisely, it holds that
\begin{align}\notag
u(t,x)-\frac{1}{\lambda(t)^{\frac{3}{2}}}Q\left(\frac{x-\alpha(t)}{\lambda(t)}\right)
e^{i\gamma(t)}\rightarrow0\ \text{in}\ L^2(\mathbb{R}^3)\ \text{as}\ t\rightarrow0^-,
\end{align}
where
\begin{align}\notag
\lambda(t)=\lambda^*t^2+\mathcal{O}(t^3),\ \alpha(t)=x_0+\mathcal{O}(t^3),\ \gamma(t)=\frac{1}{\lambda^*|t|}+\mathcal{O}(t),
\end{align}
with some constant $\lambda^*>0$, and the blowup speed is given by:
\begin{align}\notag
\|D^{\frac{1}{2}}u(t)\|_2\sim\frac{C(u_0)}{|t|^{\frac{1}{4}}},
\end{align}
where $C(u_0)>0$ only depend on the initial data $u_0$.
\end{theorem}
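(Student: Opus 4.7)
The plan is to follow the backward-in-time construction scheme used by Krieger--Lenzmann--Rapha\"{e}l in 1D and by the authors in 2D. The idea is to build an explicit family of approximate (non-exact) blowup profiles $P_{\mathcal{P}(t)}$ parametrized by a finite-dimensional modulation vector $\mathcal{P}(t)=(\lambda(t),\alpha(t),\gamma(t),b(t),\beta(t),\ldots)$, solve the flow \eqref{equ-1-hf-3D} backward in time from a sequence of initial times $t_n\uparrow 0$ with data exactly equal to the profile $P_{\mathcal{P}(t_n)}$, obtain uniform a priori bounds on the remainder $\varepsilon_n=u_n-P_{\mathcal{P}}$ on a time interval $[t^*,t_n]$ independent of $n$, and extract a weak limit as $n\to\infty$. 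The mass of each $u_n$ is exactly $\|Q\|_2$ (by construction of $P$), so the limit is automatically a minimal mass solution, and an Aubin--Lions-type compactness argument together with the coercivity of the linearized operator will give strong $L^2$ convergence to the profile $Q_{\lambda(t),\alpha(t),\gamma(t)}$.

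\textbf{Construction of profiles and modulation system.} The profile $P_{\mathcal{P}}$ is built as a perturbation of the rescaled ground state $\lambda^{-3/2}Q((x-\alpha)/\lambda)e^{i\gamma}$ by solving iteratively an ansatz of the form $P=\bigl(Q+bT_1+b^2T_2+\beta\cdot\nabla T_3+\cdots\bigr)_{\lambda,\alpha,\gamma}$, where the correction terms $T_j$ solve nonlocal elliptic equations of the type $\mathcal{L}_+T_j=F_j[Q,T_1,\ldots]$ with $\mathcal{L}_+=D+1-\frac{5}{3}Q^{2/3}$ (the relevant component of the matrix linearization around $Q$). Solvability requires orthogonality against the kernel of $\mathcal{L}_+$ (spanned by $\partial_{x_i}Q$ and by the generator $\Lambda Q=\frac{3}{2}Q+x\cdot\nabla Q$ coming from $L^2$-scaling), and this selects the modulation equations; to leading order one expects $\lambda_t=-b$, $b_t=-c_0\lambda^2$ with $c_0>0$, together with $\alpha_t=\mathcal{O}(\lambda)\beta$ and $\gamma_t=\lambda^{-1}+\cdots$. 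Integrating this system produces exactly the announced law $\lambda(t)=\lambda^*t^2+\mathcal{O}(t^3)$, $\gamma(t)=(\lambda^*|t|)^{-1}+\mathcal{O}(t)$; the free integration constants absorb the prescribed $E_0$ and $P_0$. I would tune the expansion to enough orders so that the resulting formal residue $\Psi=i\partial_tP-DP+|P|^{2/3}P$ satisfies $\|\Psi\|_{\dot H^{1/2}}\lesssim\lambda^{K}$ for some large $K$.

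\textbf{Uniform estimates and passage to the limit.} Writing $u_n=P_{\mathcal{P}(t)}+\varepsilon_n$ and imposing standard orthogonality conditions on $\varepsilon_n$ (against $\Lambda Q$, $\nabla Q$, $iQ$, $i\Lambda Q$ and analogous directions) to fix the modulation parameters, the equation for $\varepsilon_n$ becomes a linearized half-wave problem driven by $\Psi$ plus quadratic nonlinear errors. The core analytic input is a coercivity estimate of the form
\begin{equation}\notag
\langle \mathcal{L}\varepsilon,\varepsilon\rangle\geq c\|\varepsilon\|_{H^{1/2}}^2-\frac{1}{c}\sum|(\varepsilon,\Phi_j)|^2
\end{equation}
for the matrix linearization $\mathcal{L}=\mathrm{diag}(\mathcal{L}_+,\mathcal{L}_-)$ around $Q$, combined with a virial/Morawetz-type localized functional adapted to the radial setting to control the non-coercive directions. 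These give a monotonicity formula of the type $\frac{d}{dt}\mathcal{F}(\varepsilon)\lesssim\lambda^{2K-1}$ which, integrated between $t$ and $t_n$ (with zero Cauchy data at $t_n$), yields $\|\varepsilon_n(t)\|_{H^{1/2}}\lesssim\lambda(t)^{K-1/2}$ uniformly in $n$ on $[t^*,0)$. A compactness-uniqueness argument then produces the desired limit $u$, while the conservation laws and the sharp Gagliardo--Nirenberg inequality with $\|u\|_2=\|Q\|_2$ force the limit to be non-trivial and to concentrate at $x_0$.

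\textbf{Main obstacle.} The principal difficulty, beyond the bookkeeping of the asymptotic expansion, is the nonlocality of $D$ combined with the slow polynomial decay of $Q$ in $\mathbb{R}^3$ (only $|x|^{-4}$ from the Frank--Lenzmann--Silvestre analysis), which makes every localized virial or monotonicity computation extremely delicate: commutators $[D,\chi]$ produce nonlocal tails that must be re-absorbed, and higher-order profiles $T_j$ decay only algebraically, so their $H^{1/2}$ norms grow logarithmically or polynomially when cut off. I expect the hardest technical step to be the proof of the localized virial/coercivity estimate in 3D that controls the nonlocal remainder uniformly in $\lambda\to 0$, and to a lesser extent the resolution of the nonlocal elliptic problems $\mathcal{L}_+T_j=F_j$ at each step of the profile construction, which requires a sharp analysis of the kernel and the range of $\mathcal{L}_+$ in the radial class together with precise decay estimates on $T_j$ at infinity.
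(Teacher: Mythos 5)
Your proposal follows essentially the same backward-in-time, modulation-plus-energy-method strategy as the paper (profile expansion $Q_{\mathcal P}$ in $b$ and $\beta$, modulation equations fixed by orthogonality against the kernel of $L_\pm$, backward integration from $t_n\uparrow 0$, coercivity of the linearization combined with a localized virial/energy functional, compactness extraction), and you correctly flag the nonlocal commutator $[D,\chi]$ and the slow decay of $Q$ as the key 3D difficulties. Two small remarks: your leading-order modulation law $b_t=-c_0\lambda^2$ is not consistent with $\lambda(t)\sim t^2$ (in self-similar time the paper has $b_s=-\tfrac12 b^2$, hence $b_t=-b^2/(2\lambda)=\mathcal{O}(1)$ since $b\sim\lambda^{1/2}$), and the specific ingredient the paper singles out for closing the nonlinear estimates in 3D is a weighted-in-space Strichartz/dispersive estimate for the radial half-wave flow, $\bigl\||x|^{-1/q_1}\int_0^t e^{i(t-s)D}F\,ds\bigr\|_{L^{q_1}_tL^2_x}\lesssim\bigl\||x|^{1/q_2}F\bigr\|_{L^{q_2}_tL^2_x}$, which your sketch does not anticipate.
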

\begin{remark}
1. Unlike one or two dimensional case, we need to find some new technicalities to estimate the nonlinear term. In particular,  the following estimate plays an important role in our discuss
\begin{align}
\left\||x|^{-\frac{1}{q_1}}\int_0^te^{i(t-s)D}F(s)ds\right\|_{L^{q_1}(\mathbb{R});L^2(\mathbb{R}^N)}
\leq C\left\||x|^{\frac{1}{q_2}}F\right\|_{L^{q_2}(\mathbb{R});L^2(\mathbb{R}^N)},
\end{align}
where $q_1\in[2,+\infty]$ and $q_2\in(2,+\infty]$.\\
2. In our discussion, the radial assumption of $u$ is necessary. For any $u\in H^1{\mathbb{R}^3}$, we do not know how to deal with the nonlinear term in section 4.\\
3. In this article, we only give the blowup speed, but this rate is not sharp.
\end{remark}
This paper is organized as follows: in section 2, we construct the approximate blowup profile; in section 3, we estimate the parameter is small enough; in section 4, we define the new energy functional and obtain the bound estimate; in section 5, we apply the energy estimate to establish a bootstrap argument; in section 6 we prove the existence of minimal mass blowup solutions; the finally section is Appendix.\\
\textbf{Notations and definitions}\\
- $(f,g)=\int \bar{f}g$ as the inner product on $L^2(\mathbb{R}^3)$.\\
- $\|\cdot\|_{L^p}$ denotes the $L^p(\mathbb{R}^3)$ norm for $p\geq 1$.\\
- $\widehat{f}$ denotes the Fourier transform of function $f$.\\
- We shall use $X\lesssim Y$ to denote that $X\leq CY$ holds, where the constant $C>0$ may change from line to line, but $C$ is allowed to depend on universally fixed quantities only.\\
- Likewise, we use $X\sim Y$ to denote that both $X\lesssim Y$ and $Y\lesssim X$ hold.

For a sufficiently regular function $f:\mathbb{R}^3\rightarrow\mathbb{C}$, we define the generator of $L^2$ scaling given by
\begin{align}\notag
\Lambda f:=\frac{3}{2}f+x\cdot\nabla f.
\end{align}
Note that the operator $\Lambda$ is skew-adjoint on $L^2(\mathbb{R}^3)$, that is, we have
\begin{align}\notag
(\Lambda f, g)=-(f,\Lambda g).
\end{align}
We write $\Lambda^kf$, with $k\in\mathbb{N}$, for the iterates of $\Lambda$ with the convention that $\Lambda^0f\equiv f$.

In some parts of this paper, it will be convenient to identity any complex-valued function $f:\mathbb{R}^3\rightarrow\mathbb{C}$ with the function $\mathbf{f}:\mathbb{R}^3\rightarrow\mathbb{R}^2$ by setting
\begin{align}\notag
\mathbf{f}={\left[ \begin{array}{c}
f_1\\
f_2
\end{array}
\right ]}={\left[ \begin{array}{c}
\Re f\\
\Im f
\end{array}
\right ]}.
\end{align}
Corresponding, we will identity the multiplication by $i$ in $\mathbb{C}$ with the multiplication by the real $2\times 2$-matrix defined as
\begin{align}\notag
J={\left[\begin{array}{cc}
0 & -1\\
1 & 0
\end{array}\right]}.
\end{align}
\section{Approximate Blowup Profile}\label{section2-3D}
This section is devoted to the construction of the approximate blowup profile.
We start with a general observation: If $u=u(t,x)$ solves \eqref{equ-1-hf-3D}, then we define the function $v=v(s,y)$ by setting
\begin{align}\notag
u(t,x)=\frac{1}{\lambda^{\frac{3}{2}}(t)}v\left(t,\frac{x-\alpha(t)}{\lambda(t)}\right)
e^{i\gamma(t)},\ \ \frac{ds}{dt}=\frac{1}{\lambda(t)}.
\end{align}
It is easy to check that $v=v(s,y)$ with $y=\frac{x-\alpha}{\lambda}$ satisfies
\begin{equation}\notag
i\partial_sv-Dv-v+|v|^{\frac{2}{3}}v=i\frac{\lambda_s}{\lambda}\Lambda v+i\frac{\alpha_s}{\lambda}\cdot\nabla v+\tilde{\gamma_s}v,
\end{equation}
where we set $\tilde{\gamma_s}=\gamma_s-1$. Here the operators $D$ and $\nabla$ are understood as $D=D_y$ and $\nabla=\nabla_y$, respectively.  Following the slow modulated ansatz strategy developed in \cite{Raphael-2009-cpam,Raphael2011-Jams,KLR2013}. We freeze the modulation
\begin{align}\notag
-\frac{\lambda_s}{\lambda}=b,\ \ \frac{\alpha_s}{\lambda}=\beta.
\end{align}
Here $\beta$ is a vector. And we look for the approximate solution of the form
\begin{align}\notag
v(s,y)=Q_{\mathcal{P}(s)}(y),\,\,\mathcal{P}(s)=(b(s),\beta(s)),
\end{align}
with an expansion
\begin{align}\notag
Q_{\mathcal{P}}=Q(y)+\sum_{k\geq1}b^{k}R_{k,0}(y)+\sum_{k+l\geq1}b^{k}\sum_{j=1}^3\beta^{l}_j R_{k,l,j},
\end{align}
where $\mathcal{P}=(b,\beta)\in\mathbb{R}\times\mathbb{R}^3$. The terms $R_{k,0}(y)$, $R_{k,l,j}(y)$ are decomposed in real and imaginary  parts as follows
\begin{align}\notag
R_{k,0}(y)=T_{k,0}+iS_{k,0}\,\,\,\text{and}\,\,\,
R_{k,l,j}(y)=T_{k,l,j}+iS_{k,l,j}.
\end{align}
We also use the notation
\begin{align}\notag
T_{k,l}=(T_{k,l,1},T_{k,l,2},T_{k,l,3})\,\,\text{and}\,\,S_{k,l}=(S_{k,l,1},S_{k,l,2},S_{k,l,3}).
\end{align}
We will define ODE for $b(s)$ and $\beta(s)$ of type
\begin{align}\notag
b_s=P_1(b,\beta),\,\,\,\beta_s=P_2(b,\beta),
\end{align}
where $P_1$ and $P_2$ are approximate polynomials in $b$ and $\beta$.
We adjust the modulation equation for $(b_s,\beta_s)$ to ensure the solvability of the obtained  system, and a specific algebra leads to the laws to leading order:
\begin{align}\notag
b_s=-\frac{1}{2}b^2,\ \beta_s=-b\beta.
\end{align}
This allows us to construct a high order approximation $Q_{\mathcal{P}}$ solution to
\begin{align}\notag
-i\frac{b^2}{2}\partial_bQ_{\mathcal{P}}-ib\sum_{j=1}^{3}\beta_j\partial_{\beta_j}Q_{\mathcal{P}}-DQ_{\mathcal{P}}
-Q_{\mathcal{P}}+ib\Lambda Q_{\mathcal{P}}-i\beta\cdot\nabla Q_{\mathcal{P}} +|Q_{\mathcal{P}}|^{\frac{2}{3}}Q_{\mathcal{P}}=-\Phi_{\mathcal{P}},
\end{align}
where $\Phi_{\mathcal{P}}$ is some small and well-localized error term.

We have the following result about an approximate blowup profile $\mathbf{Q}_{\mathcal{P}}$, parameterized by $\mathcal{P}=(b,\beta)$, around the ground state $\mathbf{Q}=[Q,0]^{\top}$.
\begin{lemma}(\textbf{Approximate Blowup Profile})\label{lemma-3app-3D}
Let $\mathcal{P}=(b,\beta)\in\mathbb{R}\times\mathbb{R}^3$. There exists a smooth function $\mathbf{Q}_{\mathcal{P}}=\mathbf{Q}_{\mathcal{P}}(x)$ of the form
\begin{align}\notag
\mathbf{Q}_{\mathcal{P}}=&\mathbf{Q}+b\mathbf{R}_{1,0}+\beta\cdot\mathbf{R}_{0,1}+a\sum_{j=1}^3b_j\mathbf{R}_{1,1,j}
+b^2\mathbf{R}_{2,0}\notag\\&+\sum_{j=1}^3\beta_j^2\mathbf{R}_{0,2,j}+b^3\mathbf{R}_{3,0}+b^2\beta\cdot\mathbf{R}_{2,1}+b^4\mathbf{R}_{4,0}
\end{align}
that satisfies the equation
\begin{align}\label{equ-3approxiamte-3D}
-J\frac{1}{2}b^2\partial_b\mathbf{Q}_{\mathcal{P}}-&Jb\sum_{j=1}^{3}\beta_j\partial_{\beta_j}\mathbf{Q}_{\mathcal{P}}
-D\mathbf{Q}_{\mathcal{P}}-\mathbf{Q}_{\mathcal{P}}+Jb\Lambda\mathbf{Q}_{\mathcal{P}}\notag\\
&-J\sum_{j=1}^{3}\beta_j\partial_{\beta_j}\mathbf{Q}_{\mathcal{P}}
+|\mathbf{Q}_{\mathcal{P}}|^{\frac{2}{3}}\mathbf{Q}_{\mathcal{P}}
=-\mathbf{\Phi}_{\mathcal{P}}.
\end{align}
Here the functions $\{\mathbf{R}_{k,l}\}_{0\leq k\leq3,0\leq l\leq1}$ satisfy the following regularity and decay bounds:
\begin{align}\label{3app-regulairty-3D}
\|\mathbf{R}_{k,l}\|_{H^m}+\|\Lambda\mathbf{R}_{k,l}\|_{H^m}
+\|\Lambda^2\mathbf{R}_{k,l}\|_{H^m}\leq1,\ &\text{for}\ m\in\mathbb{N},\\\label{3app-decay-3D}
|\mathbf{R}_{k,l}|+|\Lambda\mathbf{R}_{k,l}|+|\Lambda^2\mathbf{R}_{k,l}|\leq
\langle x\rangle^{-4},\ &\text{for}\ x\in\mathbb{R}^3.
\end{align}
Moreover, the term on the right-hand side of \eqref{equ-3approxiamte-3D} satisfies
\begin{align}\label{3app-regu-decay-3D}
\|\mathbf{\Phi}_{\mathcal{P}}\|_{H^m}\leq\mathcal{O}(b^5+\beta^2\mathcal{P}),\ |\nabla\mathbf{\Phi}_{\mathcal{P}}|\leq\mathcal{O}(b^5+\beta^2\mathcal{P})\langle x\rangle^{-4},
\end{align}
for $m\in\mathbb{N}$ and $x\in\mathbb{R}^3$.

In addition, the mass, the energy and the linear momentum of $\mathbf{Q}_{\mathcal{P}}$ satisfy
\begin{align*}
\int|\mathbf{Q}_{\mathcal{P}}|^2&=\int  Q^2+\mathcal{O}(b^4+\beta^2+\beta\mathcal{P}^2),\\
E(\mathbf{Q}_{\mathcal{P}})&=e_1b^2+\mathcal{O}(b^4+\beta^2+\beta\mathcal{P}^2),\\
P(\mathbf{Q}_{\mathcal{P}})&=p_1\beta+\mathcal{O}(b^4+\beta^2+\beta\mathcal{P}^2).
\end{align*}
Here $\beta^2=\sum_{j,k=1}^3\beta_j\beta_k$ and $e_1>0$ and $p_1>0$ are the positive constants given by
\begin{align}\notag
e_1=\frac{1}{2}(L_{-}S_{1,0},S_{1,0}),\ \ p_1=2\int_{\mathbb{R}^3}L_{-}S_{0,1}\cdot S_{0,1},
\end{align}
where $S_{1,0}$ and $S_{0,1}$ satisfy $L_{-}S_{1,0}=\Lambda Q$ and $L_{-}S_{0,1}=-\nabla Q$, respectively.
\end{lemma}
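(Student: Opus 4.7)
The plan is to follow the standard slow-modulated ansatz strategy from Krieger--Lenzmann--Rapha\"el \cite{KLR2013} adapted to the three-dimensional setting, expanding $\mathbf{Q}_{\mathcal{P}}$ as a finite Taylor polynomial in $(b,\beta)$ around $\mathbf{Q}$ and solving order-by-order the equations obtained by matching coefficients of the same monomial $b^{k}\beta^{l}$ in \eqref{equ-3approxiamte-3D}. After substituting the ansatz into the left-hand side and using $DQ+Q-Q^{5/3}=0$, the coefficient of every monomial produces a linear equation of the form $L_{+}T_{k,l}=F_{k,l}$ or $L_{-}S_{k,l}=G_{k,l}$, where $L_{+}=D+1-\tfrac{5}{3}Q^{2/3}$ and $L_{-}=D+1-Q^{2/3}$ are the linearized operators around the ground state and the source terms $F_{k,l}, G_{k,l}$ are built from $Q$ and the $\mathbf{R}_{k',l'}$ already determined at lower order.

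First I would treat the orders $b$ and $\beta$. Here $L_{-}S_{1,0}=\Lambda Q$ and $L_{-}S_{0,1}=-\nabla Q$ are solvable in the radial (resp.~antisymmetric) sector because $\Lambda Q\perp\ker L_{-}=\mathrm{span}\{Q\}$ and $-\nabla Q\perp\ker L_{-}$ as well, so we may take $T_{1,0}=0$ and $T_{0,1}=0$ without losing anything. Then I would move to the quadratic orders $b^2,\,b\beta,\,\beta^2$: these force equations on $T_{2,0}, T_{1,1,j}, T_{0,2,j}$ through $L_{+}$, and the Fredholm solvability condition against $\nabla Q\in\ker L_{+}$ is precisely what pins down the modulation law $b_s=-\tfrac{1}{2}b^2,\ \beta_s=-b\beta$: the chosen coefficients $-\tfrac{1}{2}$ and $-1$ are fixed so that the terms $-Jb^{2}\partial_{b}/2$ and $-Jb\sum\beta_{j}\partial_{\beta_{j}}$ cancel the non-solvable component of the right-hand side, guaranteeing the orthogonality needed for the $L_{\pm}$ inversions. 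I would iterate this procedure through cubic and quartic orders ($b^3, b^2\beta, b^4$), each time using the non-degeneracy of $L_{\pm}$ on the orthogonal complement of their kernels (established in \cite{FrankLS2016}) to solve and to absorb any ``bad'' pieces into higher-order modulation corrections already accounted for in $P_1, P_2$. The collected terms of degree $\geq 5$ in $b$ or of degree $\geq 2$ involving $\beta$ and $\mathcal{P}$ are placed into the error $-\mathbf{\Phi}_{\mathcal{P}}$.

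The main obstacle will be establishing the pointwise decay bound $|\mathbf{R}_{k,l}|+|\Lambda\mathbf{R}_{k,l}|+|\Lambda^{2}\mathbf{R}_{k,l}|\lesssim \langle x\rangle^{-4}$ in \eqref{3app-decay-3D}. Because $D=\sqrt{-\Delta}$ is nonlocal, one does not gain arbitrary polynomial decay from $L_{\pm}^{-1}$: the resolvent of $D+1$ has a convolution kernel whose spatial tails decay only polynomially, and so the decay of $L_{\pm}^{-1}F$ is governed by the decay of the source $F$ capped at the decay rate of the Green's function. The sharp decay $\langle x\rangle^{-4}$ of $Q$ itself proven by Frank--Lenzmann--Silvestre \cite{FrankLS2016} is exactly the borderline rate, and I would propagate it inductively: given $\mathbf{R}_{k',l'}\in \langle x\rangle^{-4}L^{\infty}$ at lower order, each source $F_{k,l}$ inherits the same decay (since the nonlinearity $|Q_{\mathcal{P}}|^{2/3}Q_{\mathcal{P}}$ expanded around $Q$ produces factors of $Q^{2/3}$ or $Q^{-1/3}\mathbf{R}_{k',l'}$ that decay at least as fast as $\langle x\rangle^{-4}$), and I would invoke the pointwise resolvent estimate for $(D+1)^{-1}$ against sources decaying like $\langle x\rangle^{-4}$ to close the bound. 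The $H^m$ regularity \eqref{3app-regulairty-3D} follows more routinely from elliptic regularity for $L_{\pm}$ in Bessel potential spaces, and the commutator bounds on $\Lambda\mathbf{R}_{k,l}$ and $\Lambda^{2}\mathbf{R}_{k,l}$ are obtained by differentiating the defining equations and reusing the same inversion scheme.

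Finally, the mass, energy and momentum identities are obtained by direct substitution of the expansion into \eqref{mass-hf-3D}--\eqref{energy-hf-3D} and integration by parts. The constant term $\int Q^{2}$ comes from $\mathbf{Q}$; the $b^{2}$-coefficient in the energy collapses to $\tfrac{1}{2}(L_{-}S_{1,0},S_{1,0})=e_{1}$ after using $L_{-}S_{1,0}=\Lambda Q$ together with $E'(Q)=0$, and analogously for the momentum coefficient $p_{1}$. All cross terms of order $b,\beta,b^{3},b\beta^{2},\ldots$ vanish either by parity (odd integrals of radial/antisymmetric combinations) or by the orthogonality relations used to solve the order-by-order system, while the remaining contributions are of order $b^{4}+\beta^{2}+\beta\mathcal{P}^{2}$ as claimed.
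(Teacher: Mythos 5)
Your proposal follows essentially the same route as the paper: the paper's own proof is only a sketch that recalls the kernel properties of $L_{\pm}$ from \cite{FrankLS2016}, states the Fredholm solvability conditions \eqref{approximate-1-hf-3D}, and defers the order-by-order construction to \cite{KLR2013,Georgiev-Li}, which is precisely the scheme you outline. Your discussion of the borderline $\langle x\rangle^{-4}$ decay through the nonlocal resolvent and of the origin of $e_1$ and $p_1$ supplies more detail than the paper itself records, and is consistent with the cited arguments.
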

\begin{proof}
We recall that the definition of linear operator
\begin{align}\notag
L_+=D+1-\frac{5}{3}Q^{2/3},\ L_-=D+1-Q^{2/3}.
\end{align}
From \cite{FrankLS2016} we have the key property that the kernel of $L_+$ and $L_-$ is given by
\begin{align}\notag
\ker L_+=\{\partial_{x_1}Q,\partial_{x_3}Q,\partial_{x_3}Q\}\,\,\text{and}\,\,
\ker L_-=\{Q\}.
\end{align}
It follows from the above properties (see \cite{Abramowita-1964} for the properties of Helmholtz kernel and \cite[Appendix A]{Cote-Coz-2011} or proof of \cite[Lemma 3.2]{Merle-Raphael-Duke2014} for similar arguments) that
\begin{align}\label{approximate-1-hf-3D}
\forall g\in L^2,\,(g,\nabla Q)=0,\,\exists f_+\in L^2,\,L_+f_+=g,\notag\\
\forall g\in L^2,\,(g, Q)=0,\,\exists f_-\in L^2,\,L_-f_-=g.
\end{align}
Using the above property \eqref{approximate-1-hf-3D}, we discuss our ansatz for $Q_{\mathcal{P}}$ to solve \eqref{equ-approximate-hf-3D} order by order. Following the similar  argument, see \cite{KLR2013,Georgiev-Li}, we can prove this lemma, and here we omit the details.
\end{proof}

\begin{remark}\label{remark-3D}
1. We know that $Q_{\mathcal{P}}$ has the following form
\begin{align*}
Q_{\mathcal{P}}=&Q+ibS_{1,0}+i\beta\cdot S_{0,1}+b\beta\cdot T_{1,1}+b^2T_{2,0}+\sum_{j=1}^3\beta_j^2T_{0,2,j}\\
&+b^3T_{3,0}+ib^2\beta\cdot S_{2,1}+b^4 T_{4,0}.
\end{align*}
2. We have the following identity:
\begin{align}\label{3app-relation-3D}
(S_{1,0},S_{1,0})=-2(T_{2,0},Q).
\end{align}
3. $T_{k,l}$ and $S_{k,l}$ have the following symmetry properties:
\begin{align*}
S_{1,0},T_{2,0}, T_{0,2},S_{3,0},S_{4,0}\,\,\text{are radial symmetry},\\
S_{0,1},T_{1,1},S_{2,1}\,\,\text{are antisymmetry}.
\end{align*}
\end{remark}

\section{Geometrical Decomposition and Modulation Equation}\label{section-mod-estimate-3D}
Let $u\in H^{1/2}(\mathbb{R}^3)$ be a radial solution of \eqref{equ-1-hf-3D} on some time interval $[t_0,t_1]$ with $t_1<0$. Assume that $u(t)$ admits a geometrical decomposition of the form
\begin{align}\label{mod-decomposition-3D}
u(t,x)=\frac{1}{\lambda^{\frac{3}{2}}(t)}[Q_{\mathcal{P}(t)}+\epsilon]
\left(s,\frac{x-\alpha(t)}{\lambda(t)}\right)
e^{i\gamma(t)},\ \ \frac{ds}{dt}=\frac{1}{\lambda(t)},
\end{align}
with $\mathcal{P}(t)=(b(t),\beta(t))$, and we impose the uniform smallness bound
\begin{align}\notag
b^2(t)+|\beta(t)|+\|\epsilon\|_{H^{1/2}}^2\ll1.
\end{align}
Furthermore, we assume that $u(t)$ has almost critical mass in the sense that
\begin{align}\label{mod-mass-assume-3D}
\left|\int|u(t)|^2-\int Q^2\right|\lesssim\lambda^2(t),\ \ \forall t\in[t_0,t_1].
\end{align}
To fix the modulation parameters $\{b(t),\beta(t),\lambda(t),\alpha(t),\gamma(t)\}$ uniquely, we impose the following orthogonality conditions on $\epsilon=\epsilon_1+i\epsilon_2$ as follows:
\begin{align}\label{mod-orthogonality-condition-3D}
(\epsilon_2,\Lambda Q_{1\mathcal{P}})-(\epsilon_1,\Lambda Q_{2\mathcal{P}})=0,\notag\\
(\epsilon_2,\partial_bQ_{1\mathcal{P}})-(\epsilon_1,\partial_bQ_{2\mathcal{P}})=0,\notag\\
(\epsilon_2,\partial_{\beta_j}Q_{1\mathcal{P}})-(\epsilon_1,\partial_{\beta_j}Q_{2\mathcal{P}})=0,\notag\\
(\epsilon_2,\partial_{x_j} Q_{1\mathcal{P}})-(\epsilon_1,\partial_{x_j} Q_{2\mathcal{P}})=0,\notag\\
(\epsilon_2,\rho_1)+(\epsilon_1,\rho_2)=0,
\end{align}
 the function $\rho=\rho_1+i\rho_2$ is defined by
\begin{align}\label{mod-definition-rho-3D}
&L_{+}\rho_1=S_{1,0},\notag\\ &L_{-}\rho_2=b\frac{2}{3}Q^{-\frac{1}{3}}S_{1,0}\rho_1+b\Lambda\rho_1-2bT_{2,0}
+\frac{2}{3}\beta\cdot Q^{-\frac{1}{3}}S_{0,1}\rho_1-\beta\cdot\nabla\rho_1-\beta\cdot T_{1,1},
\end{align}
where $S_{1,0}$, $T_{2,0}$ and $T_{1,1}$ are the functions, see Remark \ref{remark-3D}. Note that $L_{+}^{-1}$ exists on $L^2_{rad}(\mathbb{R}^3)$ and thus $\rho_1$ is well-defined. Moreover, it is easy to see that the right-hand side in the equation for $\rho_2$ is orthogonality to $Q$. Indeed
\begin{align*}
(Q,\frac{2}{3}Q^{-\frac{1}{3}}S_{1,0}\rho_1+\Lambda\rho_1-2T_{2,0})&=
\frac{2}{3}(Q^{\frac{2}{3}}S_{1,0},\rho_1)-(\Lambda Q,\rho_1)-2(Q,T_{2,0})\\
&=\frac{2}{3}(Q^{\frac{2}{3}}S_{1,0},\rho_1)-(S_{1,0},L_{-}\rho_1)+(S_{1,0},S_{1,0})\\
&=-(S_{1,0},L_{+}\rho_1)+(S_{1,0},S_{1,0})=0,
\end{align*}
using that $(S_{1,0},S_{1,0})=-2(T_{2,0},Q)$, see \eqref{3app-relation-3D}, and the definition of $\rho_1$. Moreover, we clearly see that $\frac{2}{3}Q^{-\frac{1}{3}}\beta\cdot S_{0,1}\rho_1-\beta\cdot\nabla\rho_1-\beta\cdot T_{1,1}\bot Q$, since $S_{0,1,j}$ and $T_{1,1,j}$ are the antisymmetry functions, whereas $\rho_1$ and $Q$ are radial symmetry functions. Hence $\rho_2$ is well-defined.

In the conditions \eqref{mod-orthogonality-condition-3D}, we use the notation
\begin{align}\notag
Q_{\mathcal{P}}=Q_{1\mathcal{P}}+iQ_{2\mathcal{P}},
\end{align}
which (in terms of the vector notation used in Section 3) means that
\begin{align}\notag
\mathbf{Q}_{\mathcal{P}}={\left[\begin{array}{c}
Q_{1\mathcal{P}}\\Q_{2\mathcal{P}}
\end{array}
\right]}.
\end{align}

By the similar arguments as \cite{KLR2013,Georgiev-Li,MerleR2006}, we can obtain  the modulation parameters $$\{b(t),~\beta(t),~\lambda(t),~\alpha(t),~\gamma(t)\}$$ are uniquely determined, provided that $\epsilon=\epsilon_1+i\epsilon_2\in H^{1/2}(\mathbb{R}^3)$ is sufficiently small. Moreover, it follows from the standard arguments that $\{b(t),\beta(t),\lambda(t),\alpha(t),\gamma(t)\}$ are $C^1$-functions.

If we insert the decomposition \eqref{mod-decomposition-3D} into \eqref{equ-1-hf-3D}, we obtain the following system
\begin{align}\label{mod-system-1-3D}
&(b_s+\frac{1}{2}b^2)\partial_bQ_{1\mathcal{P}}+(\beta_s+b\beta)\cdot\partial_{\beta}Q_{1\mathcal{P}}
+\partial_s\epsilon_1-M_{-}(\epsilon)+b\Lambda\epsilon_1-\beta\cdot\nabla\epsilon_1\notag\\
=&(\frac{\lambda_s}{\lambda}+b)(\Lambda Q_{1\mathcal{P}}+\Lambda\epsilon_1)
+(\frac{\alpha_s}{\lambda}-\beta)\cdot(\nabla Q_{1\mathcal{P}}+\nabla\epsilon_1)\notag\\
&+\tilde{\gamma}_s(Q_{2\mathcal{P}}+\epsilon_2)+\Im(\Phi_{\mathcal{P}})
-R_2(\epsilon),\\\label{mod-system-2-3D}
&(b_s+\frac{1}{2}b^2)\partial_bQ_{2\mathcal{P}}+(\beta_s+b\beta)\cdot\partial_{\beta}Q_{2\mathcal{P}}
+\partial_s\epsilon_2+M_{+}(\epsilon)+b\Lambda\epsilon_2-\beta\cdot\nabla\epsilon_2\notag\\
=&(\frac{\lambda_s}{\lambda}+b)(\Lambda Q_{2\mathcal{P}}+\Lambda\epsilon_2)
+(\frac{\alpha_s}{\lambda}-\beta)\cdot(\nabla Q_{2\mathcal{P}}+\nabla\epsilon_2)\notag\\
&-\tilde{\gamma}_s(Q_{1\mathcal{P}}-\epsilon_1)-\Re(\Phi_{\mathcal{P}})+R_1(\epsilon).
\end{align}
Here $\Phi_{\mathcal{P}}$ denotes the error term from lemma \ref{lemma-3app-3D}, and $M=(M_{+},M_{-})$ are the small deformations of the linearized operator $L=(L_{+},L_{-})$ given by
\begin{align}\label{mod-define-M1-3D}
M_{+}(\epsilon)=&D\epsilon_1+\epsilon_1
-\frac{4}{3}|Q_{\mathcal{P}}|^{\frac{2}{3}}\epsilon_1
-\frac{1}{3}|Q_{\mathcal{P}}|^{-\frac{4}{3}}(Q_{1\mathcal{P}}^2-Q_{2\mathcal{P}}^2)\epsilon_1\notag\\
&-\frac{2}{3}|Q_{\mathcal{P}}|^{-\frac{4}{3}}Q_{1\mathcal{P}}Q_{2\mathcal{P}}\epsilon_2,\\\label{mod-define-M2}
M_{-}(\epsilon)=&D\epsilon_2+\epsilon_2
-\frac{4}{3}|Q_{\mathcal{P}}|^{\frac{2}{3}}\epsilon_2
-\frac{1}{3}|Q_{\mathcal{P}}|^{-\frac{4}{3}}(Q_{1\mathcal{P}}^2-Q_{2\mathcal{P}}^2)\epsilon_2\notag\\
&-\frac{2}{3}|Q_{\mathcal{P}}|^{-\frac{4}{3}}Q_{1\mathcal{P}}Q_{2\mathcal{P}}\epsilon_1.
\end{align}
And $R_1(\epsilon)$, $R_2(\epsilon)$ are the high order terms about $\epsilon$.
\begin{align*}
R_1(\epsilon)&=\frac{2}{3}|Q_{\mathcal{P}}|^{-\frac{4}{3}}(Q_{1\mathcal{P}}\epsilon_1
+Q_{2\mathcal{P}}\epsilon_2)\epsilon_1
+\frac{1}{3}|Q_{\mathcal{P}}|^{-\frac{4}{3}}|\epsilon|^2Q_{1\mathcal{P}}\\
&-\frac{4}{9}|Q_{\mathcal{P}}|^{-\frac{10}{3}}(Q_{1\mathcal{P}}\epsilon_1
+Q_{2\mathcal{P}}\epsilon_2)Q_{1\mathcal{P}}+\mathcal{O}(\epsilon^3)\\
R_2(\epsilon)&=\frac{2}{3}|Q_{\mathcal{P}}|^{-\frac{4}{3}}(Q_{1\mathcal{P}}\epsilon_1
+Q_{2\mathcal{P}}\epsilon_2)\epsilon_2
+\frac{1}{3}|Q_{\mathcal{P}}|^{-\frac{4}{3}}|\epsilon|^2Q_{2\mathcal{P}}\\
&-\frac{4}{9}|Q_{\mathcal{P}}|^{-\frac{10}{3}}(Q_{1\mathcal{P}}\epsilon_1
+Q_{2\mathcal{P}}\epsilon_2)Q_{2\mathcal{P}}+\mathcal{O}(\epsilon^3).
\end{align*}

We have the following energy type bound.
\begin{lemma}\label{lemma-mod-1-3D}
For $t\in[t_0,t_1]$, it holds that
\begin{align}\notag
b^2+|\beta|+\|\epsilon\|_{H^{1/2}}^2\lesssim\lambda(|E_0|+|P_0|)
+\mathcal{O}(\lambda^2+b^4+\beta^2+\beta\mathcal{P}^2).
\end{align}
Here $E_0=E(u_0)$ and $P_0=P(u_0)$ denote the conserved energy and linear momentum of $u=u(t,x)$, respectively.
\end{lemma}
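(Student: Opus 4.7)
I would combine the three conservation laws (mass, energy, linear momentum) with the expansion of the corresponding functionals at $Q_{\mathcal{P}}$ provided by Lemma~\ref{lemma-3app-3D}, the approximate profile equation \eqref{equ-3approxiamte-3D}, and a coercivity estimate for the linearized Hessian on the subspace cut out by the orthogonality conditions \eqref{mod-orthogonality-condition-3D}. Under the scaling in \eqref{mod-decomposition-3D}, a change of variables together with the fact that $D$ commutes with a constant phase yields $E(u)=\lambda^{-1}E(Q_{\mathcal{P}}+\epsilon)$ and $P(u)=\lambda^{-1}P(Q_{\mathcal{P}}+\epsilon)$, whereas the $L^2$-norm is invariant. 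Conservation therefore gives
\begin{align*}
\lambda E_0=E(Q_{\mathcal{P}}+\epsilon),\qquad \lambda P_0=P(Q_{\mathcal{P}}+\epsilon),
\end{align*}
and the almost-critical mass assumption \eqref{mod-mass-assume-3D} combined with the expansion $\int|Q_{\mathcal{P}}|^2=\int Q^2+\mathcal{O}(b^4+\beta^2+\beta\mathcal{P}^2)$ furnishes
\begin{align*}
2\,\Re(Q_{\mathcal{P}},\epsilon)+\|\epsilon\|_{L^2}^2=\mathcal{O}(\lambda^2+b^4+\beta^2+\beta\mathcal{P}^2).
\end{align*}

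The heart of the argument is a careful treatment of the energy identity. Taylor-expanding $E$ around $Q_{\mathcal{P}}$, the linear contribution $\langle E'(Q_{\mathcal{P}}),\epsilon\rangle$ is rewritten through \eqref{equ-3approxiamte-3D}, which gives
\begin{align*}
DQ_{\mathcal{P}}-|Q_{\mathcal{P}}|^{2/3}Q_{\mathcal{P}}=-Q_{\mathcal{P}}+\Phi_{\mathcal{P}}+J\bigl(\tfrac12 b^2\partial_bQ_{\mathcal{P}}+b\textstyle\sum\beta_j\partial_{\beta_j}Q_{\mathcal{P}}-b\Lambda Q_{\mathcal{P}}+\sum\beta_j\partial_{x_j}Q_{\mathcal{P}}\bigr).
\end{align*}
Each $J$--modulation term pairs with $\epsilon$ as $(\epsilon_2,f_1)-(\epsilon_1,f_2)$ and is therefore killed by exactly one of the first four orthogonality conditions in \eqref{mod-orthogonality-condition-3D}. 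What remains is $\langle E'(Q_{\mathcal{P}}),\epsilon\rangle=-\Re(Q_{\mathcal{P}},\epsilon)+\langle\Phi_{\mathcal{P}},\epsilon\rangle$, which by the mass identity above equals $\tfrac12\|\epsilon\|_{L^2}^2$ up to the stated error plus $\mathcal{O}(\|\epsilon\|_{L^2}(b^5+\beta^2\mathcal{P}))$. The quadratic part equals $\tfrac12[(L_+\epsilon_1,\epsilon_1)+(L_-\epsilon_2,\epsilon_2)]-\tfrac12\|\epsilon\|_{L^2}^2$ plus a perturbation of size $\mathcal{O}(\mathcal{P}\|\epsilon\|_{L^2}^2)$ coming from the difference between $Q_{\mathcal{P}}$ and $Q$, so the two $\|\epsilon\|_{L^2}^2$ contributions cancel exactly. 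Using $E(Q_{\mathcal{P}})=e_1b^2+\mathcal{O}(b^4+\beta^2+\beta\mathcal{P}^2)$ and the coercivity
\begin{align*}
(L_+\epsilon_1,\epsilon_1)+(L_-\epsilon_2,\epsilon_2)\gtrsim\|\epsilon\|_{H^{1/2}}^2,
\end{align*}
valid on the subspace determined by \eqref{mod-orthogonality-condition-3D} (the fifth condition, built from $\rho$ via \eqref{mod-definition-rho-3D}, being engineered precisely to control the single negative direction of $L_+$ after the $\mathcal{O}(\mathcal{P})$ perturbation), one obtains
\begin{align*}
b^2+\|\epsilon\|_{H^{1/2}}^2\lesssim\lambda|E_0|+\mathcal{O}(\lambda^2+b^4+\beta^2+\beta\mathcal{P}^2).
\end{align*}

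For the linear momentum, bilinearity gives
\begin{align*}
P(Q_{\mathcal{P}}+\epsilon)=P(Q_{\mathcal{P}})-2\!\int\!\bigl(\epsilon_2\nabla Q_{1\mathcal{P}}-\epsilon_1\nabla Q_{2\mathcal{P}}\bigr)+P(\epsilon),
\end{align*}
and the cross term vanishes by the translation orthogonality $(\epsilon_2,\partial_{x_j}Q_{1\mathcal{P}})-(\epsilon_1,\partial_{x_j}Q_{2\mathcal{P}})=0$ in \eqref{mod-orthogonality-condition-3D}, while $|P(\epsilon)|\lesssim\|\epsilon\|_{H^{1/2}}^2$ by Plancherel. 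Combining with $P(Q_{\mathcal{P}})=p_1\beta+\mathcal{O}(b^4+\beta^2+\beta\mathcal{P}^2)$ gives $|\beta|\lesssim\lambda|P_0|+\|\epsilon\|_{H^{1/2}}^2+\mathcal{O}(\cdots)$, and adding this to the energy bound above produces the assertion. The principal obstacle is the coercivity step: it relies on the kernel structure $\ker L_-=\operatorname{span}\{Q\}$, $\ker L_+=\operatorname{span}\{\partial_{x_1}Q,\partial_{x_2}Q,\partial_{x_3}Q\}$, on the presence of a single simple negative eigenvalue of $L_+$, and on checking that the perturbations of order $\mathcal{O}(\mathcal{P})$ coming from replacing $Q$ by $Q_{\mathcal{P}}$ do not destroy positivity; I would carry this out by adapting the scheme of \cite{MerleR2006,KLR2013,Georgiev-Li}.
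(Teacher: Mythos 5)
Your plan reconstructs exactly the standard argument from Krieger--Lenzmann--Rapha\"el and the two-dimensional companion paper \cite{Georgiev-Li} to which the text delegates this proof: scaling of the conserved quantities, Taylor expansion of $E$ and $P$ around $Q_{\mathcal{P}}$, elimination of the linear modulation contribution via the first four orthogonality conditions in \eqref{mod-orthogonality-condition-3D} after substituting the approximate profile equation \eqref{equ-3approxiamte-3D}, cancellation of the $\tfrac12\|\epsilon\|_{L^2}^2$ terms against the mass constraint, and coercivity of $(L_+,L_-)$ on the constrained subspace (Lemma \ref{lemma-app-coercivity-estimate-3D}). This is correct and is essentially the paper's (omitted) proof; the only slip is that when you solve \eqref{equ-3approxiamte-3D} for $DQ_{\mathcal{P}}-|Q_{\mathcal{P}}|^{2/3}Q_{\mathcal{P}}$ the displayed $J(\cdots)$ block should carry the opposite overall sign, which is immaterial since each such term pairs with $\epsilon$ as $(\epsilon_2,f_1)-(\epsilon_1,f_2)$ and vanishes by orthogonality irrespective of sign.
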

\begin{proof}
By the similar arguments as \cite{Georgiev-Li}, we can obtain this estimate. Here we omit the details.
\end{proof}

We continue with estimating the modulation parameters. To this end, we define the vector-valued function
\begin{align}\label{define-mod-3D}
\mathbf{Mod}(t):=\left(b_s+\frac{1}{2}b^2,\tilde{\gamma}_s,\frac{\lambda_s}{\lambda}+b,
\frac{\alpha_s}{\lambda}-\beta,\beta_s+b\beta\right).
\end{align}
We have the following result.
\begin{lemma}\label{lemma-mod-2-3D}
For $t\in[t_0,t_1]$, we have the bound
\begin{align}
|\mathbf{Mod}(t)|\leq\lambda^2+b^4+\beta^2+\beta\mathcal{P}^2+\mathcal{P}^2\|\epsilon\|_2+
\|\epsilon\|_2^2+\|\epsilon\|_{H^{1/2}}^3.
\end{align}
Furthermore, we have the improved bound
\begin{align}\notag
\left|\frac{\lambda_s}{\lambda}+b\right|\leq b^5 +\beta\mathcal{P}^2+\mathcal{P}^2\|\epsilon\|_2
+\|\epsilon\|_2^2+\|\epsilon\|_{H^{1/2}}^3.
\end{align}
\end{lemma}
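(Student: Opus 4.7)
The plan is to derive the bounds by differentiating each of the five orthogonality conditions in \eqref{mod-orthogonality-condition-3D} with respect to $s$, substituting the $\partial_s \epsilon_1$ and $\partial_s \epsilon_2$ from the modulation system \eqref{mod-system-1-3D}--\eqref{mod-system-2-3D}, and then solving for the five modulation quantities collected in $\mathbf{Mod}(t)$. Differentiating, say, $(\epsilon_2,\Lambda Q_{1\mathcal{P}})-(\epsilon_1,\Lambda Q_{2\mathcal{P}})=0$ produces terms of the form $(\partial_s\epsilon_j,\Lambda Q_{k\mathcal{P}})$ plus terms coming from $\partial_s Q_{k\mathcal{P}}$ (which are themselves linear in $b_s$ and $\beta_s$, hence of size $\mathcal{O}(\mathcal{P})$ times $\mathbf{Mod}$). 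Repeating this for all five orthogonality conditions yields a square linear system on $\mathbf{Mod}(t)$ whose matrix is a small perturbation of a fixed matrix built from the inner products of the test functions $\{\Lambda Q,\partial_bQ_{\mathcal{P}},\partial_{\beta_j}Q_{\mathcal{P}},\partial_{x_j}Q_{\mathcal{P}},\rho\}$ with the generators $\{\Lambda Q_{\mathcal{P}},Q_{2\mathcal{P}},Q_{1\mathcal{P}},\ldots\}$; by the choice of orthogonalities this matrix is invertible (nondegenerate) for $\|\epsilon\|_{H^{1/2}}$ and $\mathcal{P}$ small.

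Once the system is inverted, each component of $\mathbf{Mod}$ is bounded by the $L^2$ pairings of $\epsilon$ with fixed rapidly decaying functions (controlled via Cauchy--Schwarz), by the error $\mathbf{\Phi}_{\mathcal{P}}$ from Lemma \ref{lemma-3app-3D} (size $b^5+\beta^2\mathcal{P}$), and by the higher order nonlinear terms $R_1(\epsilon), R_2(\epsilon)$ (of size $\|\epsilon\|_2^2+\|\epsilon\|_{H^{1/2}}^3$). Combining with Lemma \ref{lemma-mod-1-3D}, which controls $\|\epsilon\|_{H^{1/2}}^2+b^2+|\beta|$ by $\lambda(|E_0|+|P_0|)+\mathcal{O}(\lambda^2+b^4+\beta^2+\beta\mathcal{P}^2)$, yields the stated bound
\[
|\mathbf{Mod}(t)|\lesssim\lambda^2+b^4+\beta^2+\beta\mathcal{P}^2+\mathcal{P}^2\|\epsilon\|_2+\|\epsilon\|_2^2+\|\epsilon\|_{H^{1/2}}^3.
\]
The steps are completely analogous to the corresponding derivation in \cite{KLR2013} and \cite{Georgiev-Li}; the only adaptation needed is tracking the three-dimensional nonlinear structure and the antisymmetry/radial symmetry of the various profiles recorded in Remark \ref{remark-3D}.

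The main technical obstacle is the \emph{improved} bound on $\left|\frac{\lambda_s}{\lambda}+b\right|$, where the generic a priori bound would only give the $b^4$ contribution (coming from the $\mathcal{O}(b^4)$ size of $T_{4,0}$ in $Q_{\mathcal{P}}$). To gain an extra order of smallness and replace $b^4$ by $b^5$, we exploit the specific orthogonality condition $(\epsilon_2,\rho_1)+(\epsilon_1,\rho_2)=0$ and the defining equations \eqref{mod-definition-rho-3D} for $\rho_1,\rho_2$. Concretely, after differentiating this particular orthogonality in $s$ and substituting the system, the apparently $\mathcal{O}(b^4)$ contribution is collected into an expression of the form $(b_s+\tfrac12 b^2)(\cdots)+b(\cdots)$ and \emph{algebraically cancelled} by the terms $b\Lambda\rho_1-2bT_{2,0}+\tfrac{2}{3}bQ^{-1/3}S_{1,0}\rho_1$ on the right-hand side of the equation for $L_-\rho_2$; this is precisely what makes the definition of $\rho_2$ natural. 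The analogous cancellation for the $\beta$-component uses the $-\beta\cdot\nabla\rho_1-\beta\cdot T_{1,1}+\tfrac{2}{3}\beta\cdot Q^{-1/3}S_{0,1}\rho_1$ piece. Writing this cancellation carefully (including using the identity $(S_{1,0},S_{1,0})=-2(T_{2,0},Q)$ from \eqref{3app-relation-3D} and the symmetry properties of Remark \ref{remark-3D} so that diagonal terms $(\rho_1,\Lambda Q)$ can be converted through $L_+\rho_1=S_{1,0}$) is the delicate step; once performed, the residual contribution to $\lambda_s/\lambda+b$ is at most $\mathcal{O}(b^5+\beta\mathcal{P}^2+\mathcal{P}^2\|\epsilon\|_2+\|\epsilon\|_2^2+\|\epsilon\|_{H^{1/2}}^3)$, which is the claimed improved estimate.
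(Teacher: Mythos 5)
Your overall plan is the right one and matches what the paper does: differentiate the five orthogonality conditions \eqref{mod-orthogonality-condition-3D} in $s$, substitute $\partial_s\epsilon_1,\partial_s\epsilon_2$ from \eqref{mod-system-1-3D}--\eqref{mod-system-2-3D}, package the result as a linear system $(A+B)\mathbf{Mod}(t)=\mathcal{O}(\cdots)$ with $A$ invertible and $B=\mathcal{O}(\mathcal{P})$, and read off the generic bound from the $\Phi_{\mathcal{P}}=\mathcal{O}(b^5+\beta^2\mathcal{P})$ estimate, the $R_1,R_2$ quadratic terms, and the mass constraint. That part of your argument is fine.

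The gap is in the explanation of the \emph{improved} bound for $\bigl|\tfrac{\lambda_s}{\lambda}+b\bigr|$. You attribute it to the $\rho$ orthogonality $(\epsilon_2,\rho_1)+(\epsilon_1,\rho_2)=0$ and to the cancellation encoded in the definition of $\rho_2$ in \eqref{mod-definition-rho-3D}. That is the wrong source. Differentiating the $\rho$ orthogonality yields the \emph{Law for} $\tilde{\gamma}_s$: the leading non-degenerate pairing is $\tilde{\gamma}_s\bigl((Q,\rho_1)+\mathcal{O}(\mathcal{P}^2)\bigr)$, and on the right one still sees $-(b_s+\tfrac12 b^2)\bigl((S_{1,0},\rho_1)+\mathcal{O}(\mathcal{P}^2)\bigr)$ with an $\mathcal{O}(1)$ coefficient. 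Since $b_s+\tfrac12 b^2$ is only controlled at the generic level $\mathcal{O}(b^4+\cdots)$, this law cannot possibly yield a bound better than $b^4$ for anything. The cancellation built into $\rho_2$ (precisely the terms $b\Lambda\rho_1-2bT_{2,0}+\tfrac23bQ^{-1/3}S_{1,0}\rho_1+\cdots$, using \eqref{3app-relation-3D} and $L_+\rho_1=S_{1,0}$) serves only to render $\rho_2$ well-defined and to keep the $\tilde{\gamma}_s$ law from being polluted by $\mathcal{O}(1)$ couplings to the remaining modulation parameters; it is irrelevant to the improved $\lambda$ estimate.

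The improved bound actually comes from differentiating the \emph{second} orthogonality condition, $(\epsilon_2,\partial_bQ_{1\mathcal{P}})-(\epsilon_1,\partial_bQ_{2\mathcal{P}})=0$, which produces the Law for $\lambda$:
\begin{align*}
\Bigl(\frac{\lambda_s}{\lambda}+b\Bigr)\bigl[2e_1+\mathcal{O}(\mathcal{P}^2)\bigr]
+\sum_{j=1}^3\bigl((\beta_j)_s+b\beta_j\bigr)\mathcal{O}(\mathcal{P})
=(R_2,\partial_bQ_{2\mathcal{P}})+(R_1,\partial_bQ_{1\mathcal{P}})
+\mathcal{O}\bigl((\mathcal{P}^2+|\mathbf{Mod}|)(\|\epsilon\|_2+\mathcal{P}^2)+b^5+\beta^2\mathcal{P}\bigr).
\end{align*}
Here three features combine to give the gain: (i) $\lambda_s/\lambda+b$ appears with a non-degenerate $O(1)$ coefficient $2e_1+\mathcal{O}(\mathcal{P}^2)$; (ii) the only other modulation parameter coupled at leading order is $\beta_s+b\beta$, and it enters with a small $\mathcal{O}(\mathcal{P})$ factor, so substituting the generic bound $|\mathbf{Mod}|\lesssim b^4+\beta^2+\cdots$ gains the extra power and produces $b^5+\beta^2\mathcal{P}$; (iii) the pairing of the approximation error $\Phi_{\mathcal{P}}$ with $\partial_bQ_{\mathcal{P}}$ (whose leading term is $iS_{1,0}$) already produces $\mathcal{O}(b^5+\beta^2\mathcal{P})$ rather than $\mathcal{O}(b^4+\beta^2)$ -- this is the point where the algebra of the profile construction, in particular $(S_{1,0},S_{1,0})=-2(T_{2,0},Q)$ and the fact that $\int|Q_{\mathcal{P}}|^2=\int Q^2+\mathcal{O}(b^4+\beta^2+\beta\mathcal{P}^2)$, enters, and where the $|\|u\|_2^2-\|Q\|_2^2|=\mathcal{O}(\lambda^2)$ contribution present in the Law for $b$ is absent. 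You should therefore redo the last part of your argument using the $\partial_b Q_{\mathcal{P}}$ orthogonality, not the $\rho$ orthogonality.
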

\begin{proof} We shall give only the sketch of the proof since the details can be found in Lemma 4.2 in \cite{Georgiev-Li}.

$\mathbf{Law~ for}$ $b$. Here the treatment is quite close to the corresponding proof of Lemma 4.2 in \cite{Georgiev-Li} so we can write
\begin{align*}
&-\left(b_s+\frac{1}{2}b^2\right)[2e_1+\mathcal{O}(\mathcal{P}^2)]
+\sum_{j=1}^3\left(\frac{(\alpha_j)_s}{\lambda}-\beta_j\right)\left[\frac{1}{2}{p_j}_1+\mathcal{O}(\mathcal{P}^2)\right]\\
=&-\int|\epsilon|^2+(R_2(\epsilon),\Lambda Q_{2\mathcal{P}})+(R_1(\epsilon),\Lambda Q_{1\mathcal{P}})\\
&+\mathcal{O}\left((\mathcal{P}^2+|\mathbf{mod}(t)|)
(\|\epsilon\|_2+\mathcal{P}^2)+|\|u\|_2^2-\|Q\|_2^2|+b^4+\beta^2+\beta\mathcal{P}^2\right).
\end{align*}
By the similar arguments, we can obtain the following estimates:

$\mathbf{Law~ for}$ $\lambda$.
\begin{align*}
&(\frac{\lambda_s}{\lambda}+b)[2e_1+\mathcal{O}(\mathcal{P}^2)]
+\sum_{j=1}^3({\beta_j}_s+b\beta_j)\mathcal{O}(\mathcal{P})\\
=&(R_2(\epsilon),\partial_b Q_{2\mathcal{P}})+(R_1(\epsilon),\partial_b Q_{1\mathcal{P}})+\mathcal{O}\left((\mathcal{P}^2
+|\mathbf{mod}(t)|)(\|\epsilon\|_2+\mathcal{P}^2)+b^5+\beta^2\mathcal{P}\right).
\end{align*}
Furthermore, by this estimate, we deduce the improved bound for $\left|\frac{\lambda_s}{\lambda}+b\right|$.

$\mathbf{Law~ for}$ $\tilde{\gamma}_s$.
\begin{align*}
&\tilde{\gamma}_s((Q,\rho_1)+\mathcal{O}(\mathcal{P}^2))\\
=&-\left(b_s+\frac{1}{2}b^2\right)
\big((S_{1,0},\rho_1)+\mathcal{O}(\mathcal{P}^2)\big)
+\left(\frac{\lambda_s}{\lambda}+b\right)\mathcal{O}(\mathcal{P})
+\sum_{j=1}^3\left(\frac{(\alpha_j)_s}{\lambda}-\beta_j\right)\mathcal{O}(\mathcal{P})\\
&+(R_2(\epsilon),\rho_2)+(R_1(\epsilon),\rho_1)+\mathcal{O}\left((\mathcal{P}^2
+|\mathbf{mod}(t)|)(\|\epsilon\|_2+\mathcal{P}^2)+b^5+\beta^2\mathcal{P}\right).
\end{align*}

$\mathbf{Law~ for}$ $\beta_j,\,j=1,2,3$.
\begin{align*}
&({\beta_j}_s+b\beta_j)\left[-\frac{1}{2}p_{1,j}+\mathcal{O}(\mathcal{P}^2)\right]\\
=&(R_2(\epsilon),\partial_{x_j} Q_{2\mathcal{P}})+(R_1(\epsilon),\partial_{x_j} Q_{1\mathcal{P}})
+\mathcal{O}\left((\mathcal{P}^2
+|\mathbf{mod}(t)|)\|\epsilon\|_2+b^4+\beta^2+\beta\mathcal{P}^2\right).
\end{align*}

$\mathbf{Law~ for}$ $\alpha_j,~j=1,2,3$.
\begin{align*}
&\left(b_s+\frac{1}{2}b^2\right)\mathcal{O}(\mathcal{P})
+\left(\frac{(\alpha_j)_s}{\lambda}-\beta_j\right)[p_{1,j} +\mathcal{O}(\mathcal{P}^2)]\\
=&(R_2(\epsilon),\partial_{\beta_j} Q_{2\mathcal{P}})+(R_1(\epsilon),\partial_{\beta_j} Q_{1\mathcal{P}})
+\mathcal{O}\left((\mathcal{P}^2
+|\mathbf{mod}(t)|)\|\epsilon\|_2+b^4+\beta^2+\beta\mathcal{P}^2\right).
\end{align*}

$\mathbf{Conclusion.}$  We collect the previous equation and estimate the nonlinear terms in $\epsilon$ by Sobolev inequalities. This gives us
\begin{align*}
(A+B)\mathbf{Mod}(t)=&\mathcal{O}\big((\mathcal{P}^2+|\mathbf{Mod}(t)|)\|\epsilon_2\|
+\|\epsilon\|_2^2+\|\epsilon\|_{H^{1/2}}^3\\
&+|\|u\|_2^2-\|Q\|_2^2|+b^4+\beta^2+\beta\mathcal{P}^2\big).
\end{align*}
Here $A=O(1)$ is invertible $9\times9$-matrix, and $B=\mathcal{O}(\mathcal{P})$ is some $9\times9$-matrix that is polynomial in $\mathcal{P}=(b,\beta)$. For $|\mathcal{P}|\ll1$, we can thus invert $A+B$ by Taylor expansion and derive the estimate for $\mathbf{Mod}(t)$ stated in this lemma.
\end{proof}

\section{Refined Energy bounds}\label{section-refined-energy-3D}
In this section, we establish a refined energy estimate, which will be a key ingredient in the compactness argument to construct minimal mass blowup solutions. Let $u=u(t,x)$ be a radial solution \eqref{equ-1-hf-3D} on the time interval $[t_0,0)$ and suppose that $w$ is a radial approximate solution to \eqref{equ-1-hf-3D} such that
\begin{equation}\label{equ-approximate-hf-3D}
iw_t-Dw+|w|^{\frac{2}{3}}w=\psi,
\end{equation}
with the priori bounds
\begin{align}\label{energy-priori-1-3D}
\|w\|_2\lesssim 1,\ \|D^{\frac{1}{2}}w\|_2\lesssim \lambda^{-\frac{1}{8}},\ \|\nabla w\|_2\lesssim \lambda^{-\frac{1}{4}}.
\end{align}
We decompose $u=w+\tilde{u}$, and hence $\tilde{u}$ is radial and satisfies
\begin{equation}\label{equ-app2-hf-3D}
i\tilde{u}_t-D\tilde{u}+(|u|^{\frac{2}{3}}u-|w|^{\frac{2}{3}}w)=-\psi,
\end{equation}
where we assume the priori estimate
\begin{align}\label{energy-priori-2-3D}
\|D^{\frac{1}{2}}\tilde{u}\|_2\lesssim \lambda^{\frac{1}{2}},\ \|\tilde{u}\|_2\lesssim \lambda,
\end{align}
as well as
\begin{align}\label{energy-priori-3-3D}
|\lambda_t+b|\lesssim\lambda^2,\ b\lesssim\lambda^{\frac{1}{2}},\ |b_t|\lesssim1,\ |\alpha_t|\lesssim\lambda.
\end{align}
Next, Let $\phi:\mathbb{R}\rightarrow\mathbb{R}$ be a smooth and radial function with the following properties
\begin{align}\label{energy-cutoff-function-3D}
\phi'(x)=\begin{cases}x\ \ &\text{for}\ \ 0\leq x\leq1,\\
3-e^{-|x|}\ &\text{for}\ x\geq2,
\end{cases}
\end{align}
and the convexity condition
\begin{align}
\phi''(x)\geq0\ \ \text{for}\ \ x\geq0.
\end{align}
Furthermore, we denote
\begin{align}\notag
F(u)=\frac{3}{8}|u|^{\frac{8}{3}},\ f(u)=|u|^{\frac{2}{3}}u,\ F'(u)\cdot h=\Re(f(u)\bar{h}).
\end{align}

Let $A>0$ be a large constant and define the quantity
\begin{align}\notag
J_A(u):=&\frac{1}{2}\int|D^{\frac{1}{2}}\tilde{u}|^2
+\frac{1}{2}\int\frac{|\tilde{u}|^2}{\lambda}
-\int[F(u)-F(w)-F'(w)\cdot\tilde{u}]\notag\\
&+\frac{b}{2}\Im\left(\int A\nabla\phi\left(\frac{x-\alpha}{A\lambda}\right)\cdot\nabla\tilde{u}\bar{\tilde{u}}\right).
\end{align}
Our strategy will be to use the preceding functional to bootstrap control over $\|\tilde{u}\|_{H^{\frac{1}{2}}}$.
\begin{lemma}\label{lemma-energy-estimate-3D}
(Localized energy estimate) Let $J_A$ be as above. Then we have
\begin{align}
\frac{dJ_A}{dt}
=&-\Im\left(\psi,D\tilde{u}+\frac{1}{\lambda}\tilde{u}-f'(w)\tilde{u}\right)
-\frac{1}{\lambda}(\tilde{u},f'(w)\tilde{u})\notag\\
&+\frac{b}{2\lambda}\int\frac{|\tilde{u}|^2}{\lambda}
-\frac{2b}{\lambda}\int_0^{+\infty}\sqrt{s}
\int_{\mathbb{R}^3}\Delta\phi(\frac{x-\alpha}{A\lambda})|\nabla\tilde{u}_s|^2dxds\notag\\
&+\frac{b}{2A^2\lambda^3}\int_0^{+\infty}\sqrt{s}\int_{\mathbb{R}^3}
\Delta^2\phi(\frac{x-\alpha}{A\lambda})|\tilde{u}_s|^2dxds\notag\\
&+\Im\left(\int\left[ibA\nabla\phi(\frac{x-\alpha}{A\lambda})\cdot\nabla\psi
+i\frac{b}{2\lambda}\Delta\psi(\frac{x-\alpha}{A\lambda})\psi\right]\bar{\tilde{u}}\right)\notag\\
&-b\Re\left(\int A\nabla\phi(\frac{x-\alpha}{A\lambda})
(\frac{4}{3}|w|^{\frac{2}{3}}\tilde{u}
+\frac{1}{3}|w|^{-\frac{4}{3}}w^2\bar{\tilde{u}})\cdot\overline{\nabla \tilde{u}}\right)\notag\\
&-\frac{1}{2}\frac{b}{\lambda}\Re\left(\int\Delta\phi(\frac{x-\alpha}{A\lambda})
(\frac{4}{3}|w|^{\frac{2}{3}}\tilde{u}
+\frac{1}{3}|w|^{-\frac{4}{3}}w^2\bar{\tilde{u}})\cdot\overline{\tilde{u}}\right)\notag\\
&+\mathcal{O}\left(\|\tilde{u}\|_{H^{1/2}}^2
+\|\tilde{u}\|_{H^{1/2}}^\frac{5}{3}
+\|\tilde{u}\|_{H^{1/2}(\mathbb{R}^3)}^{\frac{7}{6}}
+\lambda^{\frac{5}{6}}\|\psi\|_2
+H(t)^{1/2}\right),
\end{align}
where $\tilde{u}_s:=\sqrt{\frac{2}{\pi}}\frac{1}{-\Delta+s}\tilde{u}$ with $s>0$.
\end{lemma}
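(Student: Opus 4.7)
The plan is to differentiate $J_A$ term by term in $t$, substitute the evolution equation \eqref{equ-app2-hf-3D} for $\tilde u_t$ to eliminate all time derivatives of $\tilde u$, and organize the resulting identities into the structure displayed in the lemma. Throughout the computation I will freely use the a priori bounds \eqref{energy-priori-1-3D}--\eqref{energy-priori-3-3D} to convert $t$-derivatives of the slowly varying parameters $\lambda,\alpha,b$ into either leading-order contributions or $\mathcal O$-errors that are absorbed in the final remainder.

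First I would differentiate the quadratic energy $\tfrac12\|D^{1/2}\tilde u\|_2^2+\tfrac{1}{2\lambda}\|\tilde u\|_2^2$. Pairing $\tilde u_t$ against $D\tilde u+\lambda^{-1}\tilde u$ and taking real parts, self-adjointness of $D$ kills $\Im\!\int\overline{D\tilde u}\,D\tilde u$ and $\Im\!\int|\tilde u|^2/\lambda$, leaving, up to sign, the source $\Im(\psi,D\tilde u+\lambda^{-1}\tilde u)$ and the nonlinearity pairing $\Im(f(u)-f(w),D\tilde u+\lambda^{-1}\tilde u)$. Differentiating the prefactor $1/\lambda$ and invoking $|\lambda_t+b|\lesssim\lambda^2$ produces $\tfrac{b}{2\lambda^2}\|\tilde u\|_2^2$ up to $\mathcal O(\|\tilde u\|_2^2)$, which is the displayed $\tfrac{b}{2\lambda}\int|\tilde u|^2/\lambda$ term. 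Next I would differentiate the Taylor remainder $-\int[F(u)-F(w)-F'(w)\cdot\tilde u]$: using the equation for $u$ on the $F(u)$ piece and the perturbed equation for $w$ on the $F(w)$ and $F'(w)$ pieces, together with the pointwise expansion $f(u)-f(w)=f'(w)\tilde u+\mathcal O(|\tilde u|^{5/3})$ valid for the $C^{1,2/3}$ nonlinearity $|u|^{2/3}u$, one checks that the linear-in-$\tilde u$ part of $\Im(f(u)-f(w),\cdot)$ from the quadratic step combines with the Taylor-remainder derivative to produce exactly $-\Im(\psi,-f'(w)\tilde u)$ (shifting the source as displayed) and $-\tfrac{1}{\lambda}(\tilde u,f'(w)\tilde u)$, while the cubic-in-$\tilde u$ residue is controlled by Sobolev and Gagliardo--Nirenberg to yield the $\|\tilde u\|_{H^{1/2}}^{5/3}$ and $\|\tilde u\|_{H^{1/2}}^{7/6}$ contributions in the final $\mathcal O$ term.

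The most delicate step is the time derivative of the Morawetz-type correction $\tfrac{b}{2}\Im\!\int A\nabla\phi\bigl(\tfrac{x-\alpha}{A\lambda}\bigr)\cdot\nabla\tilde u\,\bar{\tilde u}$. The contributions from $b_t$, $\alpha_t$, $\lambda_t$ acting on the prefactor and inside $\phi$ are all bounded by $\|\tilde u\|_{H^{1/2}}^2$ via \eqref{energy-priori-3-3D} and land in the remainder. The core identity is $\Im\!\int A\nabla\phi\cdot\partial_t(\nabla\tilde u\,\bar{\tilde u})$; since $D=\sqrt{-\Delta}$ is nonlocal, I would invoke the Balakrishnan subordination formula
\begin{align*}
D\tilde u=\frac{1}{\pi}\int_0^{\infty}\frac{1}{\sqrt s}\,\frac{-\Delta}{-\Delta+s}\tilde u\,ds,
\end{align*}
and rewrite the virial identity in terms of the family $\tilde u_s=\sqrt{2/\pi}(-\Delta+s)^{-1}\tilde u$ declared in the statement. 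For each fixed $s>0$ the identity reduces to a local Schr\"odinger-type virial, yielding $-2\int(\partial_j F_k)\Re(\partial_j\tilde u_s\overline{\partial_k\tilde u_s})+\tfrac12\int\Delta\,\mathrm{div}(F)|\tilde u_s|^2$ with $F=A\nabla\phi((x-\alpha)/(A\lambda))$. Using radial symmetry of $\phi$ and of $\tilde u$ the Hessian term collapses to $\Delta\phi|\nabla\tilde u_s|^2$, reproducing the $-\tfrac{2b}{\lambda}\int\sqrt s\int\Delta\phi|\nabla\tilde u_s|^2$ and $+\tfrac{b}{2A^2\lambda^3}\int\sqrt s\int\Delta^2\phi|\tilde u_s|^2$ lines after integration against the $ds/\sqrt s$ weight (the net $\sqrt s$ coming from combining the subordination weight with a power of $s$ from the resolvent identity $-\Delta\tilde u_s=\sqrt{2/\pi}\tilde u-s\tilde u_s$). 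The $\psi$ and nonlinear contributions of the virial identity then give the remaining displayed lines involving $\Delta\psi$, $A\nabla\phi\cdot\overline{\nabla\tilde u}$ and $\Delta\phi$ paired with the linearization $\tfrac43|w|^{2/3}\tilde u+\tfrac13|w|^{-4/3}w^2\bar{\tilde u}$.

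The main obstacle is precisely this Morawetz identity for the nonlocal operator $D$ in 3D. Reducing it to a one-parameter family of local identities via subordination forces one to verify absolute convergence of the $s$-integral using resolvent bounds $\|(-\Delta+s)^{-1}\tilde u\|_{H^k}\lesssim(1+s)^{-1+k/2}\|\tilde u\|_2$ at the endpoints $s\to 0^+$ and $s\to+\infty$, and to control the commutator $[\nabla\phi\cdot\nabla,(-\Delta+s)^{-1}]$ uniformly in $s$ against $\Delta\phi$. A secondary difficulty is handling the nonlinear Morawetz residue: since $f(u)=|u|^{2/3}u$ is only $C^{1,2/3}$ at the origin, the chain-rule expansion must be justified in an integrated (weak) form and the subcritical remainder absorbed by Gagliardo--Nirenberg, at the cost of the $\lambda^{5/6}\|\psi\|_2$ factor appearing in the statement. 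These two points are where the 3D case genuinely departs from the 1D \cite{KLR2013} and 2D \cite{Georgiev-Li} treatments.
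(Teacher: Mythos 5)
Your overall architecture matches the paper's: differentiate $J_A$, substitute the evolution equation \eqref{equ-app2-hf-3D}, split the computation into the energy piece and the localized Morawetz piece, and for the latter reduce the nonlocal commutator $[D,\nabla\tilde\phi\cdot\nabla+\nabla\cdot\nabla\tilde\phi]$ to a one--parameter family of local virial identities via the subordination $\tilde u_s=\sqrt{2/\pi}\,(-\Delta+s)^{-1}\tilde u$. This is exactly Steps 1 and 2 of the paper, and the uniform-in-$s$ resolvent and commutator work you flag is indeed deferred to \cite[Lemma F.1]{KLR2013}, as you anticipated.

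The genuine gap is in your treatment of the nonlinear Taylor remainder $f(u)-f(w)-f'(w)\tilde u=\mathcal O\bigl(|\tilde u|^{5/3}\bigr)$, which you propose to close ``by Sobolev and Gagliardo--Nirenberg.'' That route does not exist in $3$D. Estimating $\|\,|\tilde u|^{5/3}\|_{L^2}=\|\tilde u\|_{L^{10/3}}^{5/3}$ would require $H^{1/2}(\mathbb R^3)\hookrightarrow L^{10/3}(\mathbb R^3)$, but the critical Sobolev exponent for $H^{1/2}(\mathbb R^3)$ is $3$, not $10/3$; equivalently, scaling forces the Gagliardo--Nirenberg interpolation exponent $\theta$ in $\|\tilde u\|_{L^{10/3}}\lesssim\|D^{1/2}\tilde u\|_2^\theta\|\tilde u\|_2^{1-\theta}$ to satisfy $\theta=6/5>1$. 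This is precisely the new $3$D obstruction highlighted in Remark 1.1, and it is where the radial hypothesis becomes indispensable. The paper closes the estimate by combining the Sobolev embedding $H^{s_1}\hookrightarrow L^{6/(3-2s_1)}$ with the radial Strauss inequality $\|\,|x|\,u\|_{L^\infty}\lesssim\|u\|_{H^{1/2+\delta}}$ through Stein interpolation, producing the weighted bound $\||x|^{1/q}|\tilde u|^{5/3}\|_{L^2}\lesssim\|\tilde u\|_{H^s}^{5/3}$ for some $s<1/2$ and $q\in(2,\infty]$, in conjunction with the weighted retarded Strichartz estimate of \cite[Proposition 2.3]{BGV2018} quoted in Remark 1.1. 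Without this weighted-radial route, neither the $\|\tilde u\|_{H^{1/2}}^{5/3}$, the $\|\tilde u\|_{H^{1/2}}^{7/6}$, nor the $\lambda^{5/6}\|\psi\|_2$ terms in the final remainder can be produced; the $C^{1,2/3}$ regularity of the nonlinearity, which you single out as the main difficulty, is the lesser issue.
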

\begin{proof}
$\mathbf{Step~1:}$ (Estimating the energy part).
Using \eqref{equ-app2-hf-3D}, a computation
\begin{align}\label{energy-part-1-3D}
&\frac{d}{dt}\left\{\frac{1}{2}\int|D^{\frac{1}{2}}\tilde{u}|^2
+\frac{1}{2}\int\frac{|\tilde{u}|^2}{\lambda}
-\int[F(u)-F(w)-F'(w)\cdot\tilde{u}]\right\}\notag\\
=&\Re\left(\partial_t\tilde{u},
{D\tilde{u}+\frac{1}{\lambda}\tilde{u}-(f(u)-f(w))}\right)
-\frac{\lambda_t}{2\lambda^2}\int|\tilde{u}|^2\notag\\
&-\Re\left(\partial_tw,{(f(u)-f(w)-f'(w)\cdot\tilde{u})}\right)\notag\\
=&-\Im\left(\psi,{D\tilde{u}+\frac{1}{\lambda}\tilde{u}-(f(u)-f(w))}\right)
-\frac{\lambda_t}{2\lambda^2}\int|\tilde{u}|^2
-\Re\left(\partial_tw,{(f(u)-f(w)-f'(w)\cdot\tilde{u})}\right)\notag\\
&-\Im\left(D\tilde{u}-(f(u)-f(w)),
{D\tilde{u}+\frac{1}{\lambda}\tilde{u}-(f(u)-f(w))}\right)\notag\\
=&-\Im\left(\psi,{D\tilde{u}+\frac{1}{\lambda}\tilde{u}-(f(u)-f(w))}\right)
-\frac{\lambda_t}{2\lambda^2}\int|\tilde{u}|^2
+\Im\left(f(u)-f(w),\frac{1}{\lambda}\tilde{u}\right)\notag\\
&-\Re\left(\partial_tw,{(f(u)-f(w)-f'(w)\cdot\tilde{u})}\right)\notag\\
=&-\Im\left(\psi,D\tilde{u}+\frac{1}{\lambda}\tilde{u}-f'(w)\tilde{u}\right)
-\frac{1}{\lambda}(\tilde{u},f'(w)\tilde{u})-\frac{\lambda_t}{2\lambda^2}\int|\tilde{u}|^2\notag\\
&+\Im\left(\psi-\frac{1}{\lambda}\tilde{u},f(u)-f(w)-f'(w)\cdot\tilde{u}\right)
-\Re\left(\partial_tw,{(f(u)-f(w)-f'(w)\cdot\tilde{u})}\right),
\end{align}
where we denote
\begin{align}\notag
f'(w)\tilde{u}=\frac{4}{3}|w|^{\frac{2}{3}}\tilde{u}
+\frac{1}{3}|w|^{-\frac{4}{3}}w^2\bar{\tilde{u}}.
\end{align}
From \eqref{energy-priori-3-3D} we obtain that
\begin{align}\label{energy-estimate-1-3D}
-\frac{\lambda_t}{2\lambda^2}\int|\tilde{u}|^2
&=\frac{b}{2\lambda}\int\frac{|\tilde{u}|^2}{\lambda}
-\frac{1}{2\lambda^2}(\lambda_t+b)\|\tilde{u}\|_2^2\notag\\
&=\frac{b}{2\lambda}\int\frac{|\tilde{u}|^2}{\lambda}
+\mathcal{O}(\|\tilde{u}\|_{H^{1/2}}^2).
\end{align}
Next, we estimate
\begin{align}\label{energy-estimate-2-3D}
&\left|\Im\left(\psi-\frac{1}{\lambda}\tilde{u},
f(u)-f(w)-f'(w)\cdot\tilde{u}\right)\right|\notag\\
\lesssim&\left(\|\psi\|_2+\frac{1}{\lambda}\|\tilde{u}\|_2\right)
\|f(u)-f(w)-f'(w)\cdot\tilde{u}\|_2\notag\\
\lesssim&\left(\|\psi\|_2+\frac{1}{\lambda}\|\tilde{u}\|_2\right)
\|\tilde{u}^{\frac{2}{3}+1}\|_2\notag\\
\lesssim&\left(\|\psi\|_2+\frac{1}{\lambda}\|\tilde{u}\|_2\right)
\||x|^{\frac{1}{q}}\tilde{u}^{\frac{2}{3}+1}\|_2.
\end{align}
Here in the last step we use the inequality
\begin{align}\notag
\left\||x|^{-\frac{1}{q_1}}\int_0^te^{i(t-s)D}F(s)ds\right\|_{L^{q_1}(\mathbb{R});L^2(\mathbb{R}^N)}
\leq C\left\||x|^{\frac{1}{q_2}}F\right\|_{L^{q_2}(\mathbb{R});L^2(\mathbb{R}^N)},
\end{align}
where $q_1\in[2,+\infty]$ and $q_2\in(2,+\infty]$.
This inequality can be find \cite[Proposition 2.3]{BGV2018} and we can choose $q\in(2,\infty]$.
From Sobolev embedding and the Strauss inequality \cite{BGV2018}, we know that
\begin{align}\notag
\|u\|_{L^{\frac{6}{3-2s_1}}}\leq\|u\|_{H^{s_1}}\,\,\text{and}\,\,\||x|u\|_{L^{\infty}}\leq\|u\|_{H^{\frac{1}{2}+\delta}},\,\,\text{where}\, \delta>0.
\end{align}
Using above two inequalities and Stein-interpolation theorem, we have
\begin{align}\label{energy-estimate-3-3D}
\||x|^{\frac{1}{q}}|u|^{\frac{5}{3}}\|_{L^2}
=\||x|^{\frac{3}{5q}}u\|_{\frac{10}{3}}^{\frac{5}{3}}
\leq\|u\|_{H^s(\mathbb{R}^3)}^{\frac{5}{3}}
\end{align}
where
\begin{align}\notag
\frac{3}{10}=\frac{1-\theta}{\frac{6}{3-2s_1}}+\frac{\theta}{\infty},\,\,\,
\frac{3}{5q}=1\cdot\theta+0\cdot(1-\theta)=\theta\,\,
\text{and}\,\,\,s=\left(\frac{1}{2}+\delta\right)\theta+(1-\theta)s_1.
\end{align}
Hence
\begin{align}
\theta=1-\frac{9}{5(3-2s_1)}\,\,\text{ and}\,\,s=-\frac{2}{5}+\frac{9}{5(3-2s_1)}+\theta\delta.
\end{align}
On the other hand, $q\in(2,\infty]$, from above we know that $q=\frac{3}{5\theta}>2$, hence we have
\begin{align}
s_1>\frac{3}{14}.
\end{align}
Let $\delta>0$ is small enough and $s_1>\frac{3}{14}$ and close to $\frac{3}{14}$, we have $s\leq\frac{8}{25}<\frac{1}{2}$ and close to $\frac{8}{25}$.  Combining \eqref{energy-estimate-2-3D} and \eqref{energy-estimate-3-3D}, we deduce that
\begin{align}\label{energy-estimate-4-3D}
&\left|\Im\left(\psi-\frac{1}{\lambda}\tilde{u},
f(u)-f(w)-f'(w)\cdot\tilde{u}\right)\right|\notag\\
\lesssim&\left(\|\psi\|_2+\frac{1}{\lambda}\|\tilde{u}\|_2\right)
\||x|^{\frac{1}{q}}\tilde{u}^{\frac{2}{3}+1}\|_2\notag\\
\lesssim&\left(\|\psi\|_2+\frac{1}{\lambda}\|\tilde{u}\|_2\right)
\|u\|_{H^s(\mathbb{R}^3)}^{\frac{5}{3}}\notag\\
\lesssim&\left(\|\psi\|_2+\frac{1}{\lambda}\|\tilde{u}\|_2\right)
\|u\|_{H^{1/2}(\mathbb{R}^3)}^{\frac{5}{3}}\notag\\
\lesssim&\lambda^{\frac{5}{6}}\|\psi\|_2+\|\tilde{u}\|_{H^{1/2}}^\frac{5}{3}.
\end{align}
 For the terms that conclude the $\partial_tw$, we use the equation for $w$ and bound \eqref{energy-priori-1-3D}, \eqref{energy-priori-2-3D} and \eqref{energy-estimate-3-3D}. This leads us to
\begin{align}\label{energy-estimate-5-3D}
\left|\int\partial_tw|\tilde{u}|^{\frac{2}{3}+1}\right|
\lesssim&\int (Dw-|w|^{\frac{2}{3}}w+\psi)|\tilde{u}|^{\frac{2}{3}+1}\notag\\
\lesssim&\int Dw|\tilde{u}|^{\frac{5}{3}}
+\|w^{\frac{5}{3}}\|_2\|\tilde{u}^{\frac{5}{3}}\|_2+\|\psi\|_2\|\tilde{u}^{\frac{5}{3}}\|_2\notag\\
\lesssim&\left(\|Dw\|_2+\|Dw\|_2\|w\|_2^{\frac{2}{3}}+\|\psi\|_2\right)
\|\tilde{u}^{\frac{5}{3}}\|_2\notag\\
\lesssim&\|Dw\|_2\|\tilde{u}\|_{H^{8/25}(\mathbb{R}^3)}^{\frac{5}{3}}
+\|\psi\|_2\|\tilde{u}\|_{H^{1/2}(\mathbb{R}^3)}^{\frac{5}{3}}\notag\\
\lesssim&\|\tilde{u}\|_{H^{1/2}(\mathbb{R}^3)}^{\frac{7}{6}}
+\lambda^{\frac{5}{6}}\|\psi\|_2,
\end{align}
where in the penultimate step we have used estimate \eqref{energy-estimate-3-3D}.

Finally, by the Sobolev inequality, H\"{o}lder inequality and Young inequality, we estimate
\begin{align}\label{energy-estimate-6-3D}
|-\frac{1}{\lambda}(\tilde{u},f'(w)\tilde{u})|
\lesssim&\frac{1}{\lambda}\|w^{\frac{2}{3}}\|_{9}\|\tilde{u}^2\|_{9/8}\notag\\
\lesssim&\frac{1}{\lambda}\|\nabla w\|_2^{2/3}
\|D^{\frac{1}{2}}\tilde{u}\|_2^{2/3}\|\tilde{u}\|_2^{4/3}\notag\\
\lesssim&\frac{1}{\lambda^{5/6}}\|\nabla w\|_2^{2/3}\|\tilde{u}\|_2
\left(\frac{2\|D^{\frac{1}{2}}\tilde{u}\|_2}{3}+\frac{\|\tilde{u}\|_2}{3\lambda^{1/2}}\right)\notag\\
\lesssim&\left(\frac{2\|D^{\frac{1}{2}}\tilde{u}\|_2}{3}+\frac{\|\tilde{u}\|_2}{3\lambda^{1/2}}\right)\notag\\
\lesssim& H(t)^{1/2},
\end{align}
where $H(t)$ defines in the next section see \eqref{back-define-norm-1-3D}.

We now insert \eqref{energy-estimate-1-3D}, \eqref{energy-estimate-4-3D}, \eqref{energy-estimate-5-3D} and \eqref{energy-estimate-6-3D} into \eqref{energy-part-1-3D}. Combined with the assumed a priori bounds on $\tilde{u}$, we conclude
\begin{align}
&\frac{d}{dt}\left\{\frac{1}{2}\int|D^{\frac{1}{2}}\tilde{u}|^2
+\frac{1}{2}\int\frac{|\tilde{u}|^2}{\lambda}
-\int[F(u)-F(w)-F'(w)\cdot\tilde{u}]\right\}\notag\\
=&-\Im\left(\psi,D\tilde{u}+\frac{1}{\lambda}\tilde{u}-f'(w)\tilde{u}\right)
-\Re\left(\partial_tw,{(f(u)-f(w)-f'(w)\cdot\tilde{u})}\right)\notag\\
&+\frac{a}{2\lambda}\int\frac{|\tilde{u}|^2}{\lambda}
+\mathcal{O}\left(\|\tilde{u}\|_{H^{1/2}}^2
+\|\tilde{u}\|_{H^{1/2}}^\frac{5}{3}
+\|\tilde{u}\|_{H^{1/2}(\mathbb{R}^3)}^{\frac{7}{6}}
+\lambda^{\frac{5}{6}}\|\psi\|_2
+H(t)^{1/2}\right).
\end{align}

$\mathbf{Step~2:}$ Estimating the localized virial part.
Using \eqref{equ-app2-hf-3D}, we can obtain
\begin{align}\label{energy-prior-part-2-3D}
&\frac{1}{2}\frac{d}{dt}\left(b\Im\left(\int A\nabla
\Big(\frac{x-\alpha}{A\lambda}\Big)\cdot\nabla{\tilde{u}\bar{\tilde{u}}}\right)\right)\notag\\
=&\frac{1}{2}\Im\left(\int(\partial_t\nabla{\tilde{\phi}})\cdot\nabla{u}\bar{\tilde{u}}\right)
+\frac{1}{2}\Im\left(\int\nabla{\tilde{\phi}}\cdot(\nabla\partial_t\tilde{u}\bar{\tilde{u}}
+\nabla\tilde{u}\partial_t\bar{\tilde{u}})\right)\notag\\
=&\frac{1}{2}\Im\left(\int(\partial_t\nabla{\tilde{\phi}})\cdot\nabla{u}\bar{\tilde{u}}\right)
-\frac{1}{4}\Re\left(\int\bar{\tilde{u}}\left[-iD,
\nabla\tilde{\phi}\cdot (-i\nabla)+(-i\nabla)\cdot\nabla\tilde{\phi}\right]\right)\notag\\
&-b\Re(\int(|u|^{\frac{2}{3}}u-|w|^{\frac{2}{3}}w)A\nabla\phi\left(\frac{x-\alpha}{A\lambda}\right)
\cdot\overline{\nabla\tilde{u}})\notag\\
&-\frac{1}{2}\frac{b}{\lambda}\Re\left(\int(|u|^{\frac{2}{3}}u-|w|^{\frac{2}{3}}w)
A\Delta\phi\left(\frac{x-\alpha}{A\lambda}\right)\cdot\bar{\tilde{u}}\right)\notag\\
&-b\Re\left(\int\psi\nabla\phi\left(\frac{x-\alpha}{A\lambda}\right)
\cdot\overline{\nabla\tilde{u}}\right)-\frac{1}{2}\frac{b}{\lambda}\Re
\left(\int\psi\Delta\left(\frac{x-\alpha}{A\lambda}\right)\bar{\tilde{u}}\right),
\end{align}
where $\nabla\tilde{\phi}(t,x)=bA\nabla\phi\left(t,\frac{x-\alpha}{A\lambda}\right)$.
Using the bounds \eqref{energy-priori-3-3D}, we estimate
\begin{align}\notag
|\partial_t\nabla\tilde{\phi}|\leq|b_t|+\frac{b}{\lambda}\alpha_t
+b\frac{\lambda_t}{\lambda}\leq1,\,\text{and}\,\,
|\partial_t\Delta\tilde{\phi}|\leq\lambda^{-1}
\end{align}
Hence, by \cite[Lemma F.1]{KLR2013}, we deduce that
\begin{align}
\left|\Im\left((\partial_t\nabla{\tilde{\phi}})\cdot\nabla{u}\bar{\tilde{u}}\right)\right|
\leq\|\tilde{u}\|_{\dot{H}^{1/2}}^2+\lambda^{-1}\|\tilde{u}\|_2^2.
\end{align}
Recalling that $\nabla\tilde{\phi}(t,x)=bA\nabla\phi(\frac{x-\alpha}{A\lambda})$ and using that $(-\Delta+s)^{-1}$ is self-adjoint and the definition of  $\tilde{u}_s$,  we conclude that
\begin{align}\label{energy-part2-4-3D}
&\Re\left(\int\bar{\tilde{u}}\left[-iD,
\nabla\tilde{\phi}\cdot (-i\nabla)
+(-i\nabla)\cdot\nabla\tilde{\phi}\right]\tilde{u}\right)\notag\\
=&-\frac{2b}{\lambda}\int_0^{+\infty}\sqrt{s}
\int_{\mathbb{R}^3}\Delta\phi\left(\frac{x-\alpha}{A\lambda}\right)|\nabla\tilde{u}_s|^2dxds\notag\\
&+\frac{b}{2A^2\lambda^3}\int_0^{+\infty}\sqrt{s}\int_{\mathbb{R}^3}
\Delta^2\phi\left(\frac{x-\alpha}{A\lambda}\right)|\tilde{u}_s|^2dxds.
\end{align}
where $\tilde{u}_s(t,x):=\sqrt{\frac{2}{\pi}}\frac{1}{-\Delta+s}\tilde{u}(t,x)$, for $s>0$.
Next, we estimate the other term in \eqref{energy-prior-part-2-3D}. Integrating by part as well as the bound \eqref{energy-priori-1-3D}, \eqref{energy-priori-2-3D} and \eqref{energy-priori-3-3D}, we find that
\begin{align}\label{energy-part2-1-3D}
&\Big|-b\Re\left(\int A\nabla\phi(\frac{x-\alpha}{A\lambda})
(f(u)-f(w)-f'(w)\tilde{u})\cdot\overline{\nabla\tilde{u}}\right)\notag\\
&-\frac{1}{2}\frac{b}{\lambda}\Re\left(\int\Delta\phi(\frac{x-\alpha}{A\lambda})
(f(u)-f(w)-f'(w)\tilde{u})\cdot\overline{\tilde{u}}\right)\Big|\notag\\
\lesssim&b\Re\int A\nabla\phi\left(\frac{x-\alpha}{A\lambda}\right)
|\tilde{u}|^{\frac{2}{3}+1}\nabla\tilde{u}dx
+\frac{b}{\lambda}\Re\left(\int\Delta\phi(\frac{x-\alpha}{A\lambda})
(\tilde{u}^{\frac{2}{3}+1}\cdot\overline{\tilde{u}}\right)\notag\\
\lesssim&b\Re\int A\nabla\phi\left(\frac{x-\alpha}{A\lambda}\right)
\nabla(|\tilde{u}|^{\frac{8}{3}})dx
+\frac{b}{\lambda}\Re\left(\int\Delta\phi(\frac{x-\alpha}{A\lambda})
(\tilde{u}^{\frac{2}{3}+1}\cdot\overline{\tilde{u}}\right)\notag\\
\lesssim&\frac{b}{\lambda}\Re\int \Delta\phi\left(\frac{x-\alpha}{A\lambda}\right)
|\tilde{u}|^{\frac{8}{3}}dx\notag\\
\lesssim&\lambda^{-\frac{1}{2}}\|\tilde{u}\|_{\dot{H}^{1/2}}^2
\|\tilde{u}\|_2^{\frac{2}{3}}\lesssim\|\tilde{u}\|_{H^{1/2}}^2.
\end{align}
We consider the other term in \eqref{energy-prior-part-2-3D}. Integrating by parts, we obtain
\begin{align}\label{energy-part2-2-3D}
&-b\Re\left(\int\psi\nabla\phi(\frac{x-\alpha}{A\lambda})
\cdot\overline{\nabla\tilde{u}}\right)-\frac{1}{2}\frac{b}{\lambda}\Re
\left(\int\psi\Delta(\frac{x-\alpha}{A\lambda})\bar{\tilde{u}}\right)\notag\\
&=\Im\left(\int\left[ibA\nabla\phi(\frac{x-\alpha}{A\lambda})\cdot\nabla\psi
+i\frac{b}{2\lambda}\Delta\psi(\frac{x-\alpha}{A\lambda})\psi\right]\bar{\tilde{u}}\right).
\end{align}
Finally, we insert \eqref{energy-part2-4-3D}, \eqref{energy-part2-1-3D} and \eqref{energy-part2-2-3D} into \eqref{energy-prior-part-2-3D}. This yield that
\begin{align}
&\frac{1}{2}\Im\left(\nabla{\tilde{\phi}}\cdot(\nabla\partial_t\tilde{u}\bar{\tilde{u}}
+\nabla\tilde{u}\partial_t\bar{\tilde{u}})\right)\notag\\
=&-\frac{2b}{\lambda}\int_0^{+\infty}\sqrt{s}
\int_{\mathbb{R}^3}\Delta\phi(\frac{x-\alpha}{A\lambda})|\nabla\tilde{u}_s|^2dxds
+\frac{b}{2A^2\lambda^3}\int_0^{+\infty}\sqrt{s}\int_{\mathbb{R}^3}
\Delta^2\phi(\frac{x-\alpha}{A\lambda})|\tilde{u}_s|^2dxds\notag\\
&+\Im\left(\int\left[ibA\nabla\phi(\frac{x-\alpha}{A\lambda})\cdot\nabla\psi
+i\frac{b}{2\lambda}\Delta\psi(\frac{x-\alpha}{A\lambda})\psi\right]\bar{\tilde{u}}\right)\notag\\
&-b\Re\left(\int A\nabla\phi(\frac{x-\alpha}{A\lambda})
(\frac{4}{3}|w|^{\frac{2}{3}}\tilde{u}
+\frac{1}{3}|w|^{-\frac{4}{3}}w^2\bar{\tilde{u}})\cdot\overline{\nabla \tilde{u}}\right)\notag\\
&-\frac{1}{2}\frac{b}{\lambda}\Re\left(\int\Delta\phi(\frac{x-\alpha}{A\lambda})
(\frac{4}{3}|w|^{\frac{2}{3}}\tilde{u}
+\frac{1}{3}|w|^{-\frac{4}{3}}w^2\bar{\tilde{u}})\cdot\overline{\tilde{u}}\right)\notag\\
&+\mathcal{O}\left(\|\tilde{u}\|_{H^{1/2}}^2\right).
\end{align}
This completes the proof of lemma.
\end{proof}

\section{Backwards Propagation of small}
We now apply the energy estimate of the previous section in order to establish a bootstrap argument that will be needed in the construction of minimal mass blowup solution. Let $u=u(t,x)$ be a radial solution to \eqref{equ-1-hf-3D} defined in $[t_0,0)$. Assume that $t_0<t_1<0$ and suppose that $u$ admits on $[t_0,t_1]$ a geometrical decomposition of the form
\begin{align}\label{back-decomposition-3D}
u(t,x)=\frac{1}{\lambda^{\frac{3}{2}}(t)}\left(Q_{\mathcal{P}(t)}+\epsilon\right)
\left(s,\frac{x-\alpha(t)}{\lambda(t)}\right)
e^{i\gamma(t)},
\end{align}
where $\epsilon=\epsilon_1+i\epsilon_2$ satisfies the orthogonality condition \eqref{mod-orthogonality-condition-3D} and $b^2+|\beta|+\|\epsilon\|_{H^{1/2}}^2\ll1$ holds. We set
\begin{align}\label{back-small-part-3D}
\tilde{u}(t,x)=\frac{1}{\lambda^{\frac{3}{2}}(t)}\epsilon
\left(s,\frac{x-\alpha(t)}{\lambda(t)}\right)
e^{i\gamma(t)}.
\end{align}
Define the constant
\begin{align}\label{back-define-1-3D}
A_0=\sqrt{\frac{e_1}{E_0}},
\end{align}
with the constant $e_1=\frac{1}{2}(L_{-}S_{1,0},S_{1,0})>0$ and $E_0=E(u_0)>0$. Define
\begin{align}\label{back-define-2-3D}
B_0=\frac{P_0}{p_1},
\end{align}
where $p_1=2\int_{\mathbb{R}^3}L_{-}S_{0,1}\cdot S_{0,1}>0$ is a constant and $P_0=P(u_0)$. Here we notice that $B_0$ is a vector.

Now we claim that the following backwards propagation estimate holds.
\begin{lemma}\label{lemma-backwards-3D}(\textbf{Backwards propagation of smallness})
Assume that, for some $t_1<0$ sufficiently close to $0$, we have the bounds
\begin{align*}
&\left|\|u\|_2^2-\|Q\|_2^2\right|\leq\lambda^2(t_1),\\
&\|D^{\frac{1}{2}}\tilde{u}(t_1)\|_2^2+\frac{\|\tilde{u}\|_2^2}{\lambda(t_1)}\leq\lambda(t_1),\\
&\left|\lambda(t_1)-\frac{t_1^2}{4A_0^2}\right|\leq\lambda^{\frac{3}{2}}(t_1),\
\left|\frac{b(t_1)}{\lambda^{\frac{1}{2}}(t_1)}\right|\leq\lambda(t_1),\
\left|\frac{\beta(t_1)}{\lambda(t_1)}-B_0\right|\leq\lambda(t_1),
\end{align*}
where $A_0$ and $B_0$ are defined in \eqref{back-define-1-3D} and \eqref{back-define-2-3D}, respectively. Then there exists a time $t_0<t_1$ depending on $A_0$ and $B_0$ such that for all $t\in[t_0,t_1]$, it holds that
\begin{align*}
&\|D^{\frac{1}{2}}\tilde{u}(t)\|_2^2+\frac{\|\tilde{u}\|_2^2}{\lambda(t)}\leq\lambda(t),\\
&\left|\lambda(t)-\frac{t^2}{4A_0^2}\right|\leq\lambda^{\frac{3}{2}}(t),\
\left|\frac{b(t)}{\lambda^{\frac{1}{2}}(t)}-\frac{1}{A_0}\right|\leq\lambda(t),\
\left|\frac{\beta(t)}{\lambda(t)}-B_0\right|\leq\lambda(t).
\end{align*}
\end{lemma}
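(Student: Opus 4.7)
This is a bootstrap continuity argument in the spirit of Merle-Rapha\"el. Set
\[
H(t) := \|D^{1/2}\tilde u(t)\|_2^2 + \frac{\|\tilde u(t)\|_2^2}{\lambda(t)},
\]
and let $t_\ast \in [t_0, t_1)$ be the infimum of times $\tau \in [t_0,t_1]$ on which each of the four conclusions of the lemma holds with its right-hand side doubled. The task reduces to recovering each bound with the original constant on $[t_\ast,t_1]$; by continuity this forces $t_\ast=t_0$ provided $t_0$ is chosen close enough to $0$ in terms of $A_0$ and $B_0$. On $[t_\ast,t_1]$ the bootstrap yields $|\mathcal{P}|\lesssim\lambda^{1/2}$ and $\|\epsilon\|_{H^{1/2}}^2\lesssim\lambda$, while the mass hypothesis gives $|\|u\|_2^2-\|Q\|_2^2|\lesssim\lambda^2$. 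Inserting these into Lemma~\ref{lemma-mod-2-3D} and converting to physical time through $ds/dt=1/\lambda$, the modulation parameters satisfy
\[
\lambda_t=-b+O(\lambda^{3/2}),\qquad b_t=-\tfrac{b^2}{2\lambda}+O(\lambda^{1/2}),\qquad \beta_t=-\tfrac{b\beta}{\lambda}+O(\lambda^{1/2}).
\]
The reference profile $\lambda_\ast(t)=t^2/(4A_0^2)$, $b_\ast(t)=\lambda_\ast^{1/2}(t)/A_0$, $\beta_\ast(t)=B_0\lambda_\ast(t)$ solves the leading-order system exactly, so subtracting and backwards-integrating from $t_1$ with Gronwall on the small interval delivers the improved parameter bounds with constant $\tfrac12$.

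\emph{Coercivity of $J_A$.} Apply Lemma~\ref{lemma-energy-estimate-3D} with $w(t,x)=\lambda^{-3/2}Q_{\mathcal{P}(t)}((x-\alpha)/\lambda)e^{i\gamma}$ and $\psi$ the rescaling of the profile error $\Phi_{\mathcal{P}}$, so that $\|\psi\|_2\lesssim\lambda^{3/2}$ by \eqref{3app-regu-decay-3D} and the parameter bounds. Taylor-expanding the bracket $F(u)-F(w)-F'(w)\cdot\tilde u$ and unwinding the scaling, the quadratic part of $J_A$ reduces to
\[
\tfrac{1}{2\lambda}\bigl[(L_+\epsilon_1,\epsilon_1)+(L_-\epsilon_2,\epsilon_2)\bigr]+(\text{virial correction})+O(\lambda^2).
\]
The orthogonality conditions \eqref{mod-orthogonality-condition-3D} project $\epsilon$ off the negative and null eigendirections of $L_\pm$, so the Weinstein-type coercivity of $L_\pm$ on the radial sector delivers $J_A(t)\geq c_0 H(t)-C\lambda^2(t)$ for a small universal $c_0>0$; the virial contribution of size $O(A\lambda H)$ is absorbed once $A$ is chosen large and fixed and $\lambda$ small.

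\emph{Closing the $H$-bound.} In the identity for $dJ_A/dt$ of Lemma~\ref{lemma-energy-estimate-3D}, the Morawetz term $-\tfrac{2b}{\lambda}\int_0^{+\infty}\sqrt s\int\Delta\phi\,|\nabla\tilde u_s|^2$ is non-negative by convexity of $\phi$ and the sign of $b$, hence may be discarded. Each of the remaining pieces is estimated by Step~1 and the bootstrap on $H$: the $\psi$-forcing produces $\lambda^{5/6}\|\psi\|_2\lesssim\lambda^{7/3}$; the nonlinear remainders $\|\tilde u\|_{H^{1/2}}^{5/3}$ and $\|\tilde u\|_{H^{1/2}}^{7/6}$ are $O(\lambda^{5/6})$ and $O(\lambda^{7/12})$; and the dangerous potential term $-\tfrac{1}{\lambda}(\tilde u,f'(w)\tilde u)$ is dominated by $H(t)^{1/2}$ through H\"older and the a priori bound $\|\nabla w\|_2\lesssim\lambda^{-1/4}$. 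All these sources are integrable backwards against $\lambda(t)\sim t^2$, so integrating the energy identity from $t$ to $t_1$ with $J_A(t_1)\leq C\lambda(t_1)$ gives $J_A(t)\leq\tfrac{c_0}{2}\lambda(t)$; the coercivity of the previous step then yields $H(t)\leq\tfrac12\lambda(t)$, strictly improving the bootstrap.

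\emph{Main obstacle.} The delicate point is the coercivity argument: the non-smoothness of the $L^{5/3}$ nonlinearity and the nonlocality of $D$ prevent a direct Taylor expansion of $F(u)-F(w)-F'(w)\cdot\tilde u$ in $L^2$, so the weighted Strichartz-type inequality of Remark~1.2 together with the Strauss radial decay of $\tilde u$ is needed to estimate the nonlinear remainder---this is the essentially new three-dimensional ingredient compared with \cite{KLR2013,Georgiev-Li}. Moreover, the potential term $-\tfrac{1}{\lambda}(\tilde u,f'(w)\tilde u)$ shares its scaling with the mass piece $\tfrac{1}{2\lambda}\|\tilde u\|_2^2$, so absorbing it into the coercive quadratic form genuinely requires the spectral properties of $L_\pm$ on the radial sector together with the orthogonality conditions \eqref{mod-orthogonality-condition-3D}.
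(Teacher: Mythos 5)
Your skeleton (bootstrap continuity, coercivity of $J_A$, Gronwall backwards from $t_1$) matches the paper's Steps 1--4, but there is a genuine gap in the central step, the lower bound on $\tfrac{d}{dt}J_A$.

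You propose to \emph{discard} the Morawetz term
$-\tfrac{2b}{\lambda}\int_0^\infty\sqrt{s}\int\Delta\phi(\tfrac{x-\alpha}{A\lambda})|\nabla\tilde u_s|^2\,dx\,ds$
on the grounds that it is ``non-negative by convexity of $\phi$ and the sign of $b$.'' Two problems. First, with $b>0$ on the backwards blowup branch (indeed $b\sim\lambda^{1/2}/A_0>0$) and $\Delta\phi\geq 0$ by convexity, the displayed quantity as written is non-\emph{positive}, not non-negative. Second, and more fundamentally, this term cannot be discarded even after fixing the sign. In the paper's Step~3, after rescaling back to $\epsilon$, exactly this piece becomes the localized kinetic energy $\int_0^\infty\sqrt s\int\Delta\phi_A|\nabla\epsilon_s|^2\,dx\,ds$ appearing inside the quadratic forms $L_{\pm,A}$ of \eqref{app-c-define-1-3D}--\eqref{app-c-define-2-3D}. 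By the identity \eqref{app-c-identity-3D} this term is the localized surrogate for $\|D^{1/2}\epsilon\|_2^2$, and it is precisely what makes $L_{\pm,A}$ coercive (Lemma~\ref{lemma-app-c-2-3D}). Once you throw it away, what is left from the virial part plus the mass piece is a form of the type $\int|\epsilon|^2-c\int Q^{2/3}|\epsilon|^2$, which is \emph{not} bounded below: $Q^{2/3}$ is large near the origin, so the form can be made arbitrarily negative. Hence the coercivity $\mathcal{K}_A(\tilde u)\gtrsim\lambda^{-3/2}\int|\epsilon|^2+\mathcal{O}(\dots)$, which is what closes the bootstrap, evaporates. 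The Morawetz piece must be \emph{retained} and combined with the potential terms into $L_{\pm,A}$; discarding it is not an option.

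A second, smaller gap concerns your $\psi$. For $w=\tilde Q$ the forcing $\psi$ in \eqref{equ-approximate-hf-3D} is \emph{not} just the rescaled $\Phi_{\mathcal P}$; it also carries all the modulation-equation residuals $(b_s+\tfrac12 b^2)\partial_b Q_{\mathcal P}$, $(\tfrac{\lambda_s}{\lambda}+b)\Lambda Q_{\mathcal P}$, etc. These give $\|\psi\|_2\lesssim K^2\lambda$, not $\lambda^{3/2}$; the bound $\lambda^{3/2}$ is only valid for the $\Phi_{\mathcal P}$-part $\psi_2$. The paper splits $\psi=\psi_1+\psi_2$ and treats $\psi_1$ (the modulation residual) separately, using the orthogonality conditions \eqref{mod-orthogonality-condition-3D}, the improved bound on $|\tfrac{\lambda_s}{\lambda}+b|$ from Lemma~\ref{lemma-mod-2-3D}, and the identities $L_+\nabla Q=0$, $L_+\Lambda Q=-Q$ to exploit cancellations of the form $(\epsilon_2,\Lambda Q)=\mathcal O(\mathcal P\|\epsilon\|_2)$. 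Your estimate $\lambda^{5/6}\|\psi\|_2\lesssim\lambda^{7/3}$ silently overestimates the quality of $\psi$ by half a power of $\lambda$; the argument still works, but only after this $\psi_1$-analysis, which is absent. Finally, your remark that ``absorbing $-\tfrac1\lambda(\tilde u,f'(w)\tilde u)$ into the coercive quadratic form requires spectral properties of $L_\pm$'' misidentifies what the paper does: in Step~1 that term is bounded crudely by $H(t)^{1/2}$ via H\"older and $\|\nabla w\|_2\lesssim\lambda^{-1/4}$ (see \eqref{energy-estimate-6-3D}); it is the \emph{virial} potential terms, not this one, that enter $L_{\pm,A}$.
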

\begin{proof}
By assumption, we have $u\in C^0([t_0,t_1];H^{1/2}(\mathbb{R}^3))$. Hence, by this continuity and the continuity of the functions $\{\lambda(t),b(t),\beta(t),\alpha(t)\}$, there exists a time $t_0$ such that for all $t\in[t_0,t_1]$ we have the bounds
\begin{align}\label{back-claim-1-3D}
&\|\tilde{u}\|_2^2\leq K\lambda^2(t),\ \|\tilde{u}(t)\|_{H^{1/2}}\leq K\lambda(t),\\\label{back-claim-2-3D}
&\left|\lambda(t)-\frac{t^2}{4A_0^2}\right|\leq K\lambda^{\frac{3}{2}}(t),\
\left|\frac{b(t)}{\lambda^{\frac{1}{2}}(t)}-\frac{1}{A_0}\right|\leq K\lambda(t),\
\left|\frac{\beta(t)}{\lambda(t)}-B_0\right|\leq K\lambda(t),
\end{align}
with some constant $K>0$. We now claim that the bounds stated in this lemma hold on $[t_0,t_1]$, hence improving \eqref{back-claim-1-3D} and  \eqref{back-claim-2-3D} on $[t_0,t_1]$ for $t_0=t_0(A_0)<t_1$ small enough but independent of $t_1$. We divide the proof into the following steps.

$\mathbf{Step~1~Bounds~on~energy~and~L^2-norm}$. We set
\begin{align}
w(x,t)=\tilde{Q}(t,x)=\frac{1}{\lambda^{\frac{3}{2}}(t)}Q_{\mathcal{P}(t)}
\left(\frac{x-\alpha(t)}{\lambda(t)}\right)e^{i\gamma(t)}.
\end{align}
Let $J_A$ be given by above section. Applying lemma \ref{lemma-energy-estimate-3D}, we claim that we obtain the following coercivity estimate:
\begin{align}\label{back-1-1-3D}
\frac{dJ_A}{dt}\geq&\frac{b}{\lambda^2}\int|\tilde{u}|^2
+\mathcal{O}\left(\|\tilde{u}\|_{H^{1/2}}^2
+\|\tilde{u}\|_{H^{1/2}}^\frac{5}{3}
+\|\tilde{u}\|_{H^{1/2}(\mathbb{R}^3)}^{\frac{7}{6}}
+H(t)^{1/2}+K^4\lambda^{\frac{5}{2}}\right).
\end{align}

Assume \eqref{back-1-1-3D} holds.  By the Sobolev embedding and small of $\epsilon$, we deduce the upper bound
\begin{align}\label{back-1-2-3D}
|J_A|\leq\|D^{\frac{1}{2}}\tilde{u}\|_2^2+\frac{1}{\lambda}\|\tilde{u}\|_2^2
\end{align}
Here we use the following inequality
\begin{align}\label{back-1-3-3D}
\left|\Im\left(\int A\nabla\phi\left(\frac{x-\alpha}{A\lambda}\right)
\cdot\nabla\tilde{u}\bar{\tilde{u}}\right)\right|
\leq\|D^{\frac{1}{2}}\tilde{u}\|_2^2+\frac{1}{\lambda}\|\tilde{u}\|_2^2,
\end{align}
where we can see \cite[Lemma F.1]{KLR2013}. Furthermore, due to the proximity of $Q_{\mathcal{P}}$ to $Q$, we derive the lower bound
\begin{align}
J_A=&\frac{1}{2}\int|D^{\frac{1}{2}}\tilde{u}|^2
+\frac{1}{2}\int\frac{|\tilde{u}|^2}{\lambda}
-\int[F(u)-F(w)-F'(w)\cdot\tilde{u}]\notag\\
&+\frac{b}{2}\Im\left(\int A\nabla\phi
\left(\frac{x-\alpha}{A\lambda}\right)\cdot\nabla\tilde{u}\bar{\tilde{u}}\right)\notag\\
\geq&\frac{C_0}{\lambda}\left[\|\epsilon\|_{H^{1/2}}^2-(\epsilon_1,Q)^2\right],
\end{align}
using the orthogonality conditions \eqref{mod-orthogonality-condition-3D} satisfied by $\epsilon$ and the coercivity estimate for the linearized operator $L=(L_{+},L_{-})$. On the other hand, using the conservation of the $L^2-$mass and applying lemma \ref{lemma-mod-1-3D}, we combine the assumed bounds to conclude that
\begin{align}
|\Re(\epsilon,Q_{\mathcal{P}})|\leq \|\epsilon\|_2^2+\lambda^2(t)+
\left|\int|u|^2-\int|Q|^2\right|.
\end{align}
This implies
\begin{align}\label{back-1-4-3D}
(\epsilon_1,Q)^2\leq o(\|\epsilon\|_2^2)+K^4\lambda^{4}(t).
\end{align}
Next, we define
\begin{align}\label{back-define-norm-1-3D}
H(t):=\|D^{\frac{1}{2}}\tilde{u}(t)\|_2^2
+\frac{1}{\lambda(t)}\|\tilde{u}(t)\|_2^2.
\end{align}
By integrating \eqref{back-1-1-3D} in time and using \eqref{back-1-2-3D}, \eqref{back-1-3-3D} and \eqref{back-1-4-3D}, we find
\begin{align*}
H(t)\lesssim& H(t_1)+K^4\lambda^3(t)
+\int_{t}^{t_1}\Big(\|\tilde{u}\|_{H^{1/2}}^2
+\|\tilde{u}\|_{H^{1/2}}^\frac{5}{3}\\
&+\|\tilde{u}\|_{H^{1/2}(\mathbb{R}^3)}^{\frac{7}{6}}
+H(\tau)^{1/2}
+K^{4}\lambda^{\frac{5}{2}}(\tau)\Big)d\tau\\
\lesssim&H(t_1)+K^4\lambda^3(t)
+\int_{t}^{t_1}\left(H(\tau)+H(t\tau)^{5/6}+H(\tau)^{7/12}+H(\tau)^{1/2}\right)d\tau\\
\lesssim&H(t_1)+K^4\lambda^3(t)+\int_{t}^{t_1}
H(\tau)^{1/2}d\tau.
\end{align*}
According to assumption \eqref{back-small-part-3D}, $\|\epsilon\|_{H^{1/2}(\mathbb{R}^3)}\lesssim1$ and the $\lambda\in C^1$, we have $H(t)\lesssim 1$. Hence, there exists $t_0=t_0(A_0)<t_1$ such that
\begin{align*}
\sup_{t\in[t_0,t_1]}H(t)\lesssim& H(t_1)+K^4\lambda^3(t)+\sup_{t\in[t_0,t_1]}H(t)^{1/2}\int_{t}^{t_1}d\tau\\
=&H(t_1)+K^4\lambda^3(t)+\sup_{t\in[t_0,t_1]}H(t)^{1/2}(t_1-t)\\
\leq& H(t_1)+K^4\lambda^3(t)
+\frac{(\sup_{t\in[t_0,t_1]}H(t)^{1/2})^2}{2}+\frac{(t_1-t)^2}{2}.
\end{align*}
Therefore, we deduce that
\begin{align}\notag
\sup_{t\in[t_0,t_1]}H(t)\lesssim2H(t_1)+2K^4\lambda^3(t)+(t_1-t)^2\lesssim\lambda(t),
\end{align}
which closes the bootstrap for \eqref{back-claim-1-3D}.

\textbf{Step~2~Controlling~the~law~for~the~parameters.} Here the treatment is quite close to the corresponding proof of Lemma 4.2 in \cite{Georgiev-Li,KLR2013}, so we can obtain
\begin{align}\notag
\left|\lambda^{\frac{1}{2}}(t)-\frac{t}{2A_0}\right|\leq
\left|\lambda^{\frac{1}{2}}(t_1)-\frac{t_1}{2A_0}\right|+\mathcal{O}(t^3)\leq t^2,
\end{align}
and
\begin{align}\notag
\left|\frac{\beta(t)}{\lambda(t)}-B_0\right|\leq\lambda(t).
\end{align}
This completes the proof of Step 2.

$\mathbf{Step~3~Proof~of~the~coercivity~estimate~ \eqref{back-1-1-3D}}$. Recalling that $w=\tilde{Q}$. Let $\mathcal{K}_A(\tilde{u})$ denote the terms in $\tilde{u}$ on the righthand side in lemma \ref{lemma-energy-estimate-3D}, that is, we have
\begin{align}
\mathcal{K}_A(\tilde{u})&=
\frac{b}{2\lambda}\int\frac{|\tilde{u}|^2}{\lambda}
-\frac{2b}{\lambda}\int_0^{+\infty}\sqrt{s}
\int_{\mathbb{R}^3}\Delta\phi(\frac{x-\alpha}{A\lambda})|\nabla\tilde{u}_s|^2dxds\notag\\
&+\frac{b}{2A^2\lambda^3}\int_0^{+\infty}\sqrt{s}\int_{\mathbb{R}^3}
\Delta^2\phi(\frac{x-\alpha}{A\lambda})|\tilde{u}_s|^2dxds\notag\\
&-b\Re\left(\int A\nabla\phi(\frac{x-\alpha}{A\lambda})
(\frac{4}{3}|w|^{\frac{2}{3}}\tilde{u}
+\frac{1}{3}|w|^{-\frac{4}{3}}w^2\bar{\tilde{u}})\cdot\overline{\nabla \tilde{u}}\right)\notag\\
&-\frac{1}{2}\frac{b}{\lambda}\Re\left(\int\Delta\phi(\frac{x-\alpha}{A\lambda})
(\frac{4}{3}|w|^{\frac{2}{3}}\tilde{u}
+\frac{1}{3}|w|^{-\frac{4}{3}}w^2\bar{\tilde{u}})\cdot\overline{\tilde{u}}\right).
\end{align}
Recalling that the function $\tilde{u}_s=\tilde{u}_s(t,x)$ with the parameter $s>0$ was defined in lemma \ref{lemma-energy-estimate-3D} to be $\tilde{u}_s=\sqrt{\frac{2}{\pi}}\frac{1}{-\Delta+s}\tilde{u}$ and $\tilde{u}=\frac{1}{\lambda^{\frac{3}{2}}}\epsilon(t,\frac{x}{\lambda})$, we now claim that the following estimate holds:
\begin{align}
\mathcal{K}_A(\tilde{u})\geq\frac{C}{\lambda^{\frac{3}{2}}}\int|\epsilon|^2
+\mathcal{O}(\|\tilde{u}\|_{H^{1/2}}^2+K^4\lambda^{\frac{5}{2}}),
\end{align}
where $C>0$ is some positive constant.
Indeed, from the lemma \ref{lemma-mod-2-3D} and the estimate \eqref{back-claim-1-3D} we obtain that
\begin{align}\label{back-mod-estimate-3D}
|\mathbf{Mod}(t)|\lesssim K^2\lambda^2(t).
\end{align}

First, using $\epsilon(t,x)=\lambda^{\frac{3}{2}}\tilde{u}(t,\lambda x+\alpha)$, we estimate the term
\begin{align}
&\left|b\Re\left(\int A\nabla\phi(\frac{x-\alpha}{A\lambda})
(\frac{4}{3}|w|^{\frac{2}{3}}\tilde{u}
+\frac{1}{3}|w|^{-\frac{4}{3}}w^2\bar{\tilde{u}})\cdot\overline{\nabla \tilde{u}}\right)\right|\notag\\
\lesssim&\|\nabla\tilde{\phi}\|_{L^{\infty}}\Re\left(\int
(\frac{4}{3}|w|^{\frac{2}{3}}\tilde{u}
+\frac{1}{3}|w|^{-\frac{4}{3}}w^2\bar{\tilde{u}})\cdot\overline{\nabla \tilde{u}}\right)\notag\\
\lesssim&b\Re\left(\int
(\frac{4}{3}|Q_{\mathcal{P}}|^{\frac{2}{3}}\epsilon
+\frac{1}{3}|Q_{\mathcal{P}}|^{-\frac{4}{3}}Q_{\mathcal{P}}^2\bar{\epsilon})\cdot\overline{\nabla \epsilon}\right)\notag\\
\lesssim&b\|Q_{\mathcal{P}}\|_{L^{\infty}}^{\frac{2}{3}}
\int\epsilon\nabla\epsilon\notag\\
\lesssim&b\lambda\|\tilde{u}\|_{H^{1/2}}^2\lesssim\|\tilde{u}\|_{H^{1/2}}^2,
\end{align}
where in the last step we use the uniform decay estimate $|Q_{\mathcal{P}}|\leq\langle x\rangle^{-4}$. By the definition of $\mathcal{K}_A(\tilde{u})$ and expressing everything in terms, we have
\begin{align*}
\mathcal{K}_A(\tilde{u})\gtrsim&\frac{b}{2\lambda^2}\bigg\{\int_0^{\infty}\sqrt{s}
\int\Delta\left(\frac{x}{A}\right)|\nabla\epsilon_s|^2dxds+\int|\epsilon|^2\notag\\
&-\frac{1}{2A^2}\int_0^{\infty}\sqrt{s}\int\Delta^2\phi\left(\frac{x}{A}\right)
|\epsilon_s|^2dxds\notag\\
-&\int\frac{4}{3}|Q_{\mathcal{P}}|^{\frac{2}{3}}\epsilon^2+
\frac{1}{3}|Q_{\mathcal{P}}|^{-\frac{4}{3}}Q_{\mathcal{P}}^2|\epsilon|^2\bigg\}+
\mathcal{O}(\|\tilde{u}\|_{H^{1/2}}^{\frac{5}{3}}).
\end{align*}

 Furthermore, thanks to Lemma \ref{lemma-app-c-3-3D}, we have
\begin{align}\notag
\left|\frac{1}{A^2}\int_{s=0}^{+\infty}\sqrt{s}\int\Delta^2\phi_{A}|\epsilon_s|^2dxds\right|
\leq\frac{1}{A}\|\epsilon\|_2^2.
\end{align}
Recalling the definitions of $L_{+,A}$ and $L_{-,A}$ in \eqref{app-c-define-1-3D} and \eqref{app-c-define-2-3D}, respectively. We deduce that
\begin{align}\notag
K_{A}(\tilde{u})=\frac{a}{2\lambda^2}\left\{(L_{+,A}\epsilon_1,\epsilon_1)
+(L_{-,A}\epsilon_2,\epsilon_2)+\mathcal{O}\left(\frac{1}{A}\|\epsilon\|_2^2\right)\right\}
+\mathcal{O}(\|\tilde{u}\|_{H^{1/2}}^2).
\end{align}
Next, we recall that $b\sim\lambda^{\frac{1}{2}}$ due to the above. Hence, by lemma \eqref{lemma-app-c-2-3D} and choosing the $A>0$ sufficiently large, we deduce from previous estimates that
\begin{align}
K_A(\tilde{u})\gtrsim\frac{1}{\lambda^{\frac{3}{2}}}\left\{\int|\epsilon|^2-(\epsilon_1,Q)^2\right\}
\gtrsim\frac{1}{\lambda^{\frac{3}{2}}}\int|\epsilon|^2
+\mathcal{O}(\|\tilde{u}\|_{H^{1/2}}^2+K\lambda^{\frac{5}{2}}).
\end{align}

$\mathbf{Step~4~Controlling~the~remainder~terms~in}~\frac{d}{dt}J_A$. We now control the terms that appear in lemma \ref{lemma-energy-estimate-3D} and contain $\psi$. Here we recall that $w=\tilde{Q}$ and \eqref{equ-app2-hf-3D}, which yields
\begin{align*}
\psi&=\frac{1}{\lambda^{\frac{3}{2}+1}}\Big[i(b_s+\frac{1}{2}b^2)\partial_bQ_{\mathcal{P}}
-i(\frac{\lambda_s}{\lambda}+b)\Lambda Q_{\mathcal{P}}
+i\sum_{j=1}^3((\beta_j)_s+b\beta_j)\partial_{\beta_j}Q_{\mathcal{P}}\\
&-i(\frac{\alpha_s}{\lambda}-\beta)\cdot\nabla Q_{\mathcal{P}}+
\tilde{\gamma}_sQ_{\mathcal{P}}+\Phi_{\mathcal{P}}\Big](\frac{x-\alpha}{\lambda})e^{i\gamma}.
\end{align*}
Here $\Phi_{\mathcal{P}}$ is the error term given in lemma \ref{lemma-3app-3D}. In fact, by the estimate for $Q_{\mathcal{P}}$ and $\Phi_{\mathcal{P}}$ from lemma \ref{lemma-3app-3D} and recalling \eqref{back-mod-estimate-3D}, we deduce the rough pointwise estimate
\begin{align}\notag
\left|\nabla^k\psi(x)\right|\lesssim\frac{1}{\lambda^{\frac{5}{2}+k}}
\left\langle\frac{x-\alpha}{\lambda}\right\rangle^{-4} K^2\lambda^2,\,\text{for}\,k=0,1.
\end{align}
Hence
\begin{align}\notag
\|\nabla^k\psi\|_2\lesssim K^2\lambda^{1-k},\,\,\text{for}\,\,k=0,1.
\end{align}
In particular, we obtain the following bounds
\begin{align}\notag
&\lambda\|\psi\|_2^2\lesssim K^4\lambda^3,\\
&\left|\Im\left(\int[ibA\nabla\phi(\frac{x-\alpha}{A\lambda})\cdot\nabla\psi+
i\frac{b}{2\lambda}\Delta\phi(\frac{x-\alpha}{A\lambda})\psi]\bar{\tilde{u}}\right)\right|\notag\\
\lesssim&\lambda^{\frac{1}{2}}\|\nabla\psi\|_2\|\tilde{u}\|_2
+\lambda^{-\frac{1}{2}}\|\psi\|_2\|\tilde{u}\|_2\notag\\
\lesssim&K^2\lambda^{\frac{1}{2}}\|\epsilon\|_2\lesssim o\left(\frac{\|\epsilon\|_2^2}{\lambda^{\frac{3}{2}}}\right)+K^4\lambda^{\frac{5}{2}}.
\end{align}
Write $\psi=\psi_1+\psi_2$ with $\psi_2=\mathcal{O}(\mathcal{P}|\mathbf{Mod}|+b^5)=\mathcal{O}(\lambda^{\frac{5}{2}})$, that is, we denote
\begin{align*}
\psi_1&=\frac{1}{\lambda^{\frac{3}{2}+1}}\Big[-(b_s+\frac{1}{2}b^2)S_{1,0}
-i(\frac{\lambda_s}{\lambda}+b)\Lambda Q-(\beta_s+b\beta)\cdot S_{0,1}\\
&-i(\frac{\alpha_s}{\lambda}-\beta)\cdot\nabla Q+
\tilde{\gamma}_sQ\Big](\frac{x-\alpha}{\lambda})e^{i\gamma}.
\end{align*}
Let us first deal with estimating the contributions coming from $\psi_2$. Indeed, since $b^2+|\beta|\sim\lambda$ we note that $\psi_2=\mathcal{O}(\lambda^{\frac{5}{2}})$ satisfies the pointwise bound
\begin{align}\notag
|\nabla^{k}\psi_2(x)|\leq \frac{1}{\lambda^{\frac{3}{2}+k+1}}
\left\langle\frac{x-\alpha}{\lambda}\right\rangle^{-4} K^2\lambda^{\frac{5}{2}},\ \text{for}\ k=0,1.
\end{align}
Hence
\begin{align}\notag
\|\nabla^k\psi_2\|_2\leq K^2\lambda^{\frac{3}{2}-k},\ \text{for}\ k=0,1.
\end{align}
Therefore, we obtain that
\begin{align}\notag
&\left|\Re\left(\int\left[-D\psi_2-\frac{\psi_2}{\lambda}+
(\frac{1}{3}+1)|w|^{\frac{2}{3}}\psi_2
+\frac{1}{3}|w|^{\frac{2}{3}-2}w^2\bar{\psi_2}\right]\bar{\tilde{u}}\right)\right|\notag\\
\leq&\left(\|\nabla\psi_2\|_2+\lambda^{-1}\|\psi_2\|_2+\|w\|_2^{\frac{1}{3}}
\|\psi_2\|_{6}\right)\|\epsilon\|_2\notag\\
\leq&\left(\|\nabla\psi_2\|_2+\lambda^{-1}\|\psi_2\|_2+\|w\|_2^{\frac{2}{3}}
\|\nabla\psi_2\|_2\right)\|\epsilon\|_2\notag\\
\leq&K^2\lambda^{\frac{1}{2}}\|\epsilon\|_2\leq o\left(\frac{\|\epsilon\|_2^2}{\lambda^{\frac{3}{2}}}\right)+K^4\lambda^{\frac{5}{2}},
\end{align}
which is acceptable. Here we used the H\"{o}lder inequality and Sobolev inequality.

Finally, we estimate the terms that contain $\psi_1$ have the same bounded. Indeed, using \eqref{back-mod-estimate-3D} once again and $|\mathcal{P}|\leq\lambda^{\frac{1}{2}}$, as well as $(\epsilon_2,L_-S_{1,0})=(\epsilon_2,\Lambda Q)=\mathcal{O}(\mathcal{P}\|\epsilon\|_2)$ and $(\epsilon_2,L_-S_{0,1})=-(\epsilon_2,\nabla Q)=\mathcal{O}(\mathcal{P}\|\epsilon\|_2)$, thanks to the orthogonality conditions for $\epsilon$, we find the following bound
\begin{align}\notag
&\left|\Re\left(\int\left[-D\psi_1-\frac{\psi_1}{\lambda}+
(\frac{1}{3}+1)|w|^{\frac{2}{3}}\psi_1
+\frac{1}{3}|w|^{\frac{2}{3}-2}w^2\bar{\psi_1}\right]\bar{\tilde{u}}\right)\right|\notag\\
\leq&\frac{|\mathbf{Mod}(t)|}{\lambda^2}\left[|(\epsilon_2,L_-S_{1,0})|+|(\epsilon_2,L_-S_{0,1})|
+|(\epsilon_2,L_-Q)|+\mathcal{O}(\mathcal{P}\|\epsilon\|_2)\right]\notag\\
&+\frac{1}{\lambda^2}\left|\frac{\lambda_s}{\lambda}+b\right||(\epsilon_1,L_+\Lambda Q)|+\frac{1}{\lambda^2}\left|\frac{\alpha_s}{\lambda}-\beta\right||(\epsilon_1,L_+\nabla Q)|\notag\\
\leq&K^2\lambda^{\frac{1}{2}}\|\epsilon\|_2
+\frac{K^2\lambda\|\epsilon\|_2+\lambda^{\frac{5}{2}}}{\lambda^2}
(K\lambda^{\frac{1}{2}}\|\epsilon\|_2+K^2\lambda^{2})\notag\\
\leq&o\left(\frac{\|\epsilon\|_2^2}{\lambda^{\frac{3}{2}}}\right)+K^4\lambda^{\frac{5}{2}},
\end{align}
 Moreover, we also used that $L_+\nabla Q=0$ and $L_+\Lambda Q=-Q$ together with the improved bound in Lemma \ref{lemma-mod-2-3D}, combined with the fact that $|(\epsilon_1,Q)|\leq\lambda^{\frac{1}{2}}\|\epsilon\|_2+K^2\lambda^2$, which follows from $\|\epsilon\|_2\leq\lambda$ and the conservation of $L^2$-norm.
 And the proof of this lemma is complete.
\end{proof}

\section{Existence of minimal mass blowup solutions}
In this section, we prove the following result.
\begin{theorem}
Let $\gamma_0\in\mathbb{R}$, $x_0\in\mathbb{R}^3,P_0\in\mathbb{R}^3$ and $E_0>0$ be given. Then there exist a time $t_0<0$ and a radial solution $u\in C^0([t_0,0); H^{\frac{1}{2}}(\mathbb{R}^3))$ of \eqref{equ-1-hf-3D}  such that $u$ blowup  at time $T=0$ with
\begin{align}\notag
E(u(t))=E_0,\ P(u(t))=P_0,\ \text{and}\ \|u(t)\|_2^2=\|Q\|_2^2.
\end{align}
Furthermore, we have $\|D^{\frac{1}{2}}u(t)\|_2\sim t^{-\frac{1}{4}}$ as $t\rightarrow0^{-}$, and $u$ is of the form
\begin{align}\notag
u(t,x)=\frac{1}{\lambda^{\frac{3}{2}}(t)}[Q_{\mathcal{P}(t)}+\epsilon]
\left(t,\frac{x-\alpha}{\lambda}\right)e^{i\gamma(t)}=\tilde{Q}+\tilde{u},
\end{align}
where $\mathcal{P}(t)=(b(t),\beta(t))$, and $\epsilon$ satisfies the orthogonality condition \eqref{mod-orthogonality-condition-3D}. Finally, the following estimate hold:
\begin{align*}
&\|\tilde{u}\|_2\lesssim\lambda,\,\,\|\tilde{u}\|_{H^{1/2}}\lesssim\lambda^{\frac{1}{2}},\\
&\lambda(t)-\frac{t^2}{4A_0^2}=\mathcal{O}(\lambda^2),\,\,
\frac{b}{\lambda^{\frac{1}{2}}}-\frac{1}{A_0}=\mathcal{O}(\lambda),\,\,
\frac{\beta}{\lambda}(t)-B_0=\mathcal{O}(\lambda),\\
&\gamma(t)=-\frac{4A_0^2}{t}+\gamma_0+\mathcal{O}(\lambda^\frac{1}{2}),\,\,
\alpha(t)=x_0+\mathcal{O}(\lambda^\frac{3}{2}).
\end{align*}
Here $A_0>0$ is the constant defined in \eqref{back-define-1-3D}  and $B_0$ defined in \eqref{back-define-2-3D}.
\end{theorem}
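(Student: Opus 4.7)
The plan is a compactness argument: I would construct a sequence $(u_n)$ of backward-in-time solutions of \eqref{equ-1-hf-3D} emanating from carefully chosen data at a sequence of times $t_n \nearrow 0$, invoke the backwards propagation estimate of Lemma \ref{lemma-backwards-3D} to obtain bounds on a fixed interval $[t_0,0)$ independent of $n$, and extract a strong limit $u$ on that interval using radial compactness. For each sufficiently large $n$, I set
\begin{align*}
\lambda_n := \frac{t_n^2}{4A_0^2},\ \ b_n := \frac{\lambda_n^{1/2}}{A_0},\ \ \beta_n := B_0\lambda_n,\ \ \alpha_n := x_0,\ \ \gamma_n := -\frac{4A_0^2}{t_n}+\gamma_0,
\end{align*}
and form the bare profile $\tilde{Q}_n(x):=\lambda_n^{-3/2}Q_{(b_n,\beta_n)}((x-\alpha_n)/\lambda_n)e^{i\gamma_n}$. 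By Lemma \ref{lemma-3app-3D}, its mass, energy and momentum differ from the targets $(\|Q\|_2^2,E_0,P_0)$ by $\mathcal{O}(\lambda_n^{3/2})$. I then add a small radial correction $h_n\in H^{1/2}_{\mathrm{rad}}$ with $\|h_n\|_{H^{1/2}}=\mathcal{O}(\lambda_n^{3/2})$, produced by the implicit function theorem applied to a fixed three-parameter family of perturbations whose Jacobian for $(M,E,P)$ at $h=0$ is non-degenerate, and define $u_n(t_n):=\tilde{Q}_n+h_n$. By construction $M(u_n)=\|Q\|_2^2$, $E(u_n)=E_0$, $P(u_n)=P_0$ exactly.

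Local well-posedness (\cite{BGV2018}) produces solutions $u_n\in C^0((T_n,t_n];H^{1/2}_{\mathrm{rad}})$ backwards in time. Because the size of $h_n$ is $\mathcal{O}(\lambda_n^{3/2})$, the geometrical decomposition parameters of $u_n(t_n)$ satisfy the hypotheses of Lemma \ref{lemma-backwards-3D} at $t_1=t_n$. Applying that lemma produces a uniform time $t_0<0$ \emph{independent of $n$} together with the bounds
\begin{align*}
\|\tilde{u}_n(t)\|_2\lesssim\lambda(t),\ \|D^{1/2}\tilde{u}_n(t)\|_2\lesssim\lambda^{1/2}(t),\ \ t\in[t_0,t_n],
\end{align*}
and the refined asymptotics for $\lambda_n,b_n,\beta_n,\alpha_n,\gamma_n$. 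In particular $(u_n)$ is uniformly bounded in $L^{\infty}_{\mathrm{loc}}([t_0,0);H^{1/2}_{\mathrm{rad}})$, and the equation \eqref{equ-1-hf-3D} gives equicontinuity in time in a weaker topology.

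Using the radial Strauss compact embedding $H^{1/2}_{\mathrm{rad}}(\mathbb{R}^3)\hookrightarrow\hookrightarrow L^p_{\mathrm{loc}}(\mathbb{R}^3)$ for admissible $p$ (cf.\ the inequalities used in Section \ref{section-refined-energy-3D} and \cite{BGV2018}), a diagonal Arzel\`a--Ascoli extraction produces a subsequence $u_{n_k}\to u$ strongly in $C^0_{\mathrm{loc}}([t_0,0);L^{8/3}_{\mathrm{loc}})$ and weakly-$*$ in $L^{\infty}_{\mathrm{loc}}([t_0,0);H^{1/2}_{\mathrm{rad}})$. The strong $L^{8/3}_{\mathrm{loc}}$ convergence suffices to pass to the limit in the nonlinearity $|u|^{2/3}u$, so $u$ is a radial $H^{1/2}$ solution of \eqref{equ-1-hf-3D} on $[t_0,0)$. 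Conservation of $M,E,P$, the decomposition parameters, and the refined estimates of Lemma \ref{lemma-backwards-3D} transfer to $u$. Blowup at $T=0$ follows from $\|D^{1/2}u(t)\|_2\sim\lambda^{-1/2}(t)$ together with $\lambda(t)\sim t^2/(4A_0^2)$, giving the claimed rate.

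The main obstacle is passing to the limit in the non-Lipschitz nonlinearity $|u|^{2/3}u$: weak $H^{1/2}$ convergence alone is inadequate, so the radial assumption is genuinely needed in order to gain local strong compactness via the Strauss embedding (this is exactly the point flagged in Remark 1.2(2)). A secondary delicate issue is arranging that the correction $h_n$ enforcing exact conservation laws is small enough to be absorbed into the initial bootstrap hypotheses of Lemma \ref{lemma-backwards-3D}; the size $\mathcal{O}(\lambda_n^{3/2})$ is precisely what is allowed by the assumptions $\|D^{1/2}\tilde{u}(t_1)\|_2^2+\|\tilde{u}(t_1)\|_2^2/\lambda(t_1)\leq\lambda(t_1)$ and $|\|u\|_2^2-\|Q\|_2^2|\leq\lambda^2(t_1)$, so the construction is self-consistent.
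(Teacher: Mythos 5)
Your proposal is correct and follows essentially the same route the paper intends: the paper's ``proof'' of this theorem is a one-line citation to the compactness arguments of \cite{KLR2013} and \cite{Georgiev-Li}, and your outline is precisely the standard construction from those references (choose a sequence of endpoint times $t_n\nearrow 0$, launch backward solutions from near-profile data at $t_n$, invoke the backwards-propagation Lemma \ref{lemma-backwards-3D} for uniform bounds on a fixed interval $[t_0,0)$, and extract a limiting solution by compactness). The parameter choices $\lambda_n=t_n^2/(4A_0^2)$, $b_n=\lambda_n^{1/2}/A_0$, $\beta_n=B_0\lambda_n$ are the standard ones, and the size $\mathcal{O}(\lambda_n^{3/2})$ for the correction $h_n$ is exactly what the hypotheses of Lemma \ref{lemma-backwards-3D} tolerate, so the bootstrap closes. (You implicitly read the hypothesis $|b(t_1)/\lambda^{1/2}(t_1)|\leq\lambda(t_1)$ as $|b(t_1)/\lambda^{1/2}(t_1)-1/A_0|\leq\lambda(t_1)$, which must be what is meant, since otherwise it is incompatible with the lemma's own conclusion; that is a typographical slip in the paper, not in your argument.) One small calibration of emphasis: Remark 1.2(2) indicates that the place where radial symmetry is truly forced is the weighted Strauss-type estimate used in Section \ref{section-refined-energy-3D} to control the non-Lipschitz nonlinearity, rather than the compactness extraction per se; using the compact radial embedding $H^{1/2}_{\mathrm{rad}}(\mathbb{R}^3)\hookrightarrow\hookrightarrow L^{8/3}(\mathbb{R}^3)$ in the extraction, as you do, is a valid additional use of radiality but is not the reason the paper flags it as ``necessary.''
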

\begin{proof}
Following ideas in \cite{KLR2013,Georgiev-Li}, we can easily obtain this result.  Here we omit details.
\end{proof}

\appendix
\section{Appendix}\label{Appendix}
In this section, we will give some lemmas, which are very important but the proof is relatively simple. By standard arguments as \cite{KLR2013,Georgiev-Li}, we can obtain the following results. Here we omit the details.

In the following, we assume that $A>0$ is a sufficiently large constant. Let $\phi:\mathbb{R}\rightarrow\mathbb{R}$ be the smooth cutoff function introduced in Section \ref{section-refined-energy-3D}. For $\epsilon=\epsilon_1+i\epsilon_2\in H^{1/2}(\mathbb{R}^3)$, we consider the quadratic forms
\begin{align}\label{app-c-define-1-3D}
L_{+,A}(\epsilon_1):&=\int_{s=0}^{\infty}\sqrt{s}\int\Delta \phi_A|\nabla(\epsilon_1)_{s}|^2dxds+\int|\epsilon_{1}|^2-\frac{3}{8}\int Q^{\frac{2}{3}}|\epsilon_1|^2\\\label{app-c-define-2-3D}
L_{-,A}(\epsilon_2):&=\int_{s=0}^{\infty}\sqrt{s}\int\Delta \phi_A|\nabla(\epsilon_2)_{s}|^2dxds+\int|\epsilon_{2}|^2-\int Q^{\frac{2}{3}}|\epsilon_2|^2,
\end{align}
where $\Delta \phi_A=\Delta(\phi(\frac{x}{A}))$. As in lemma \ref{lemma-energy-estimate-3D}, we denote
\begin{align}\label{app-c-define-3-3D}
u_s=\sqrt{\frac{2}{\pi}}\frac{1}{-\Delta+s}u,\  \ \text{for}\ s>0.
\end{align}
We start with the following simple identity.

For $u\in H^{1/2}(\mathbb{R}^3)$, we have
\begin{align}\label{app-c-identity-3D}
\int_0^{\infty}\sqrt{s}\int_{\mathbb{R}^3}|\nabla u_s|^2dxds=\|D^{1/2}u\|_2^2.
\end{align}
Indeed, by applying Fubini's theorem and using Fourier transform, we find that
\begin{align}\notag
\int_0^{\infty}\sqrt{s}\int_{\mathbb{R}^3}|\nabla u_s|^2dxds=\frac{2}{\pi}
\int_{\mathbb{R}^3}\int_0^{\infty}\frac{\sqrt{s}}{(\xi^2+s)^2}|\xi|^2|\hat{u}(\xi)|^2d\xi
=\|D^{1/2}u\|_2^2.
\end{align}
In particular, we have
\begin{align}\label{app-c-identity-2-3D}
\frac{2}{\pi}\int_0^{\infty}\sqrt{s}\int_{\mathbb{R}^3}|D^{\alpha}u_s|^2dxds
=\|D^{\alpha-\frac{1}{2}}u\|_2^2.
\end{align}
Next, we establish a technical result, which show that, when taking the limit $A\rightarrow+\infty$, the quadratic form $\int_0^{\infty}\sqrt{s}\int\Delta\phi_A|\nabla u_s|^2dxds+\|u\|_2^2$ defines a weak topology that serves as a useful substitute for weak convergence in $H^{1/2}(\mathbb{R}^3)$. The precise statement reads as follows.
\begin{lemma}\label{app-lemma-c-1-3D}
Let $A_n\rightarrow\infty$ and suppose that $\{u_n\}_{n=1}^{\infty}$ is a sequence in $H^{1/2}(\mathbb{R}^3)$ such that
\begin{align}\notag
\int_0^{\infty}\sqrt{s}\int\Delta\phi_{A_n}|\nabla (u_n)_s|^2dxds+\|u_n\|_2^2\leq C,
\end{align}
for some constant $C>0$ independent of $n$. Then, after possibly passing to a subsequence of $\{u_n\}_{n=1}^{\infty}$, we have that
\begin{align}\notag
u_n\rightharpoonup u\ \text{weakly in}\ L^2(\mathbb{R}^3)\ \text{and}\ u_n\rightarrow u\ \text{strongly in}\ L^2_{loc}(\mathbb{R}^3),
\end{align}
and $u\in H^{1/2}(\mathbb{R}^3)$. Moreover, we have the bound
\begin{align}\notag
\|D^{1/2}u\|_2^2\leq\liminf_{n\rightarrow\infty}
\int_0^{\infty}\sqrt{s}\int\Delta\phi_{A_n}|\nabla (u_n)_s|^2dxds.
\end{align}
\end{lemma}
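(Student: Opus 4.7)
The plan is to follow a three-step weak-compactness strategy: extract a subsequential weak $L^2$ limit, upgrade it to strong convergence in $L^2_{\mathrm{loc}}$, and then derive the $\liminf$ bound by a Fatou/weak-lower-semicontinuity argument tailored to the nonlocal quadratic form.

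First I would use the uniform bound $\|u_n\|_2^2\le C$ together with Banach--Alaoglu to extract a subsequence (not relabeled) with $u_n\rightharpoonup u$ weakly in $L^2(\mathbb{R}^3)$. Since $\nabla(-\Delta+s)^{-1}$ is bounded on $L^2$ for each fixed $s>0$, this immediately gives $\nabla(u_n)_s\rightharpoonup\nabla u_s$ weakly in $L^2$ for every $s>0$. To promote this to strong convergence in $L^2_{\mathrm{loc}}$, I would exploit the explicit profile \eqref{energy-cutoff-function-3D}, from which $\Delta\phi$ is continuous and strictly positive at the origin. Thus for any fixed $R>0$ and all $n$ with $A_n$ sufficiently large one has $\Delta\phi_{A_n}\ge c_R>0$ on $B_R$, so the hypothesis yields the local estimate
$$\int_0^{\infty}\sqrt{s}\int_{B_R}|\nabla(u_n)_s|^2\,dx\,ds\le \frac{C}{c_R}.$$
Combined with $\|u_n\|_2\le\sqrt{C}$ and a cutoff/commutator argument for $(-\Delta+s)^{-1}$ (or, alternatively, a Littlewood--Paley decomposition), this produces a uniform $H^{1/2}(B_R)$ bound on $u_n$. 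A diagonal Rellich--Kondrachov extraction then gives $u_n\to u$ strongly in $L^2_{\mathrm{loc}}$, and the $H^{1/2}$ bound is preserved under weak limits, so $u\in H^{1/2}(\mathbb{R}^3)$.

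For the $\liminf$ inequality, the key observation is that $\Delta\phi_{A_n}\ge 0$ is uniformly bounded in $L^\infty$ and, under the normalization of $\phi_A$ used in the paper, converges pointwise to a positive constant $c\ge 1$ as $A_n\to\infty$. Dominated convergence then gives $\sqrt{\Delta\phi_{A_n}}\,v\to \sqrt{c}\,v$ strongly in $L^2$ for every fixed $v\in L^2$, which combined with the weak convergence $\nabla(u_n)_s\rightharpoonup \nabla u_s$ upgrades to
$$\sqrt{\Delta\phi_{A_n}}\,\nabla(u_n)_s\ \rightharpoonup\ \sqrt{c}\,\nabla u_s\quad \text{weakly in }L^2$$
for every $s>0$. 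Weak lower semicontinuity of the $L^2$-norm gives $c\int|\nabla u_s|^2\,dx\le \liminf_n\int\Delta\phi_{A_n}|\nabla(u_n)_s|^2\,dx$ for each $s$. Applying Fatou's lemma in the $s$-integration and invoking identity \eqref{app-c-identity-3D} then yields $\|D^{1/2}u\|_2^2\le c\|D^{1/2}u\|_2^2\le \liminf_n\int_0^{\infty}\sqrt{s}\int\Delta\phi_{A_n}|\nabla(u_n)_s|^2\,dx\,ds$.

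The main obstacle is the second step: because $(-\Delta+s)^{-1}$ is nonlocal, the localized control on $\int_0^\infty\sqrt{s}\int_{B_R}|\nabla(u_n)_s|^2\,dx\,ds$ does not automatically translate into an $H^{1/2}(B_R)$ bound on $u_n$. The commutator of a smooth cutoff $\chi\in C_c^\infty(B_{2R})$ with $(-\Delta+s)^{-1}$ must be handled carefully so that the resulting lower-order terms can be absorbed using the $L^2$ bound on $u_n$; once this bookkeeping is done, the remaining steps of the argument are routine applications of weak compactness and Fatou's lemma.
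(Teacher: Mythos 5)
The paper itself does not supply a proof of this lemma; it simply refers to \cite{KLR2013,Georgiev-Li} and omits the details, so there is no in-text argument to compare against. On its own terms, your Step~1 is routine and your Step~3 is essentially sound: the passage $\sqrt{\Delta\phi_{A_n}}\,\nabla (u_n)_s \rightharpoonup \sqrt{c}\,\nabla u_s$ for fixed $s$, weak lower semicontinuity of the $L^2$ norm, Fatou in $s$, and the identity \eqref{app-c-identity-3D} all fit together. Two caveats there: you should verify, from the precise normalization of $\phi_A$, that the pointwise limit of $\Delta\phi_{A_n}$ is a constant $c\ge 1$ rather than merely positive (as written, the paper's notation $\Delta\phi_A=\Delta(\phi(\tfrac{x}{A}))$ would literally carry a factor $A^{-2}$ and tend to zero; the intended reading must be $(\Delta\phi)(\tfrac{x}{A})$, and one must check $(\Delta\phi)(0)\ge 1$), and for the weak-convergence step you should note that $\nabla(-\Delta+s)^{-1}$ is bounded on $L^2$ uniformly only on $s$ bounded away from $0$, which is harmless here because you argue pointwise in $s$ before Fatou.

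The genuine gap is in Step~2, and it is not ``bookkeeping.'' Writing $(\chi u_n)_s=\chi(u_n)_s+[(-\Delta+s)^{-1},\chi]u_n$ and expanding $\nabla(\chi u_n)_s$, the cross term $(\nabla\chi)(u_n)_s$ contributes
\begin{align}\notag
\int_0^\infty \sqrt{s}\,\|(\nabla\chi)(u_n)_s\|_{L^2}^2\,ds \;\lesssim\; \int_0^\infty \sqrt{s}\,\|(u_n)_s\|_{L^2}^2\,ds \;=\; c\,\|D^{-1/2}u_n\|_{L^2}^2,
\end{align}
and $\|D^{-1/2}u_n\|_{L^2}$ is \emph{not} controlled by $\|u_n\|_{L^2}$ (take $\widehat{u_n}(\xi)\sim |\xi|^{-1}$ near $\xi=0$ in $\mathbb{R}^3$: $u_n\in L^2$ but $D^{-1/2}u_n\notin L^2$). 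The same low-frequency divergence appears in the genuine commutator term $\nabla(-\Delta+s)^{-1}(\Delta\chi+2\nabla\chi\cdot\nabla)(-\Delta+s)^{-1}u_n$, which is even worse near $s=0$. To close this step one really needs an additional idea: a near/far splitting of $u_n$ relative to $\operatorname{supp}\chi$ (the near part has compact support so $\|D^{-1/2}(\mathbf 1_{B_{2R}}u_n)\|_{L^2}\lesssim_R\|u_n\|_{L^2}$ by a bound on $\widehat{\mathbf 1_{B_{2R}}u_n}$ near the origin, while the far part is handled by the exponential decay of the resolvent kernel $\tfrac{e^{-\sqrt{s}|x-y|}}{4\pi|x-y|}$), or equivalently a low/high frequency decomposition. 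Without such a splitting your commutator estimate does not close, and you also overstate the conclusion: what this argument yields after the splitting is enough local $H^{\sigma}$ control for Rellich, not a clean uniform $H^{1/2}(B_R)$ bound as you claim. You should either carry out this splitting explicitly or weaken the intermediate claim accordingly.
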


\begin{lemma}\label{lemma-app-c-2-3D}
Let $L_{+,A}(\epsilon_1)$ and $L_{-,A}(\epsilon_2)$ be the quadratic forms defined in \eqref{app-c-define-1-3D} and \eqref{app-c-define-2-3D}, respectively. Then there exist a constant $C_0>0$ and $A_0>0$ such that, for all $A\geq A_0$ and all $\epsilon=\epsilon_1+i\epsilon_2\in H^{1/2}(\mathbb{R}^3)$, we have the coercivity estimate
\begin{align}\notag
(L_{+,A}\epsilon_1,\epsilon_1)+(L_{-,A}\epsilon_2,\epsilon_2)\geq& C_0\int|\epsilon|^2-\frac{1}{C_0}\Big\{(\epsilon_1,Q)^2\notag\\
&+(\epsilon_1,S_{1,0})^2
+|(\epsilon_1,S_{0,1})|^2+|(\epsilon_2,\rho_1)|^2\Big\}.
\end{align}
Here $S_{1,0}$ and $S_{0,1}$ are the unique functions such that $L_{-}S_{1,0}=\Lambda Q$ with $(S_{1,0}, Q)=0$ and $L_{-}S_{0,1}=-\nabla Q$ with $(S_{0,1},Q)=0$, respectively, and the function $\rho_1$ is defined in \eqref{mod-definition-rho-3D}.
\end{lemma}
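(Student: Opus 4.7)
The plan is to execute a compactness–contradiction scheme in the spirit of the analogous coercivity lemmas in \cite{KLR2013,Georgiev-Li}, employing the weak-convergence substitute Lemma \ref{app-lemma-c-1-3D} in place of $H^{1/2}$ weak compactness, which is not directly compatible with the nonlocal quadratic form defining $L_{\pm,A}$.

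First I would suppose the estimate fails and extract sequences $A_n \to +\infty$ and $\epsilon^{(n)} = \epsilon_1^{(n)} + i\epsilon_2^{(n)}$ with $\|\epsilon^{(n)}\|_2 = 1$ such that $(L_{+,A_n}\epsilon_1^{(n)},\epsilon_1^{(n)}) + (L_{-,A_n}\epsilon_2^{(n)},\epsilon_2^{(n)}) \to 0$, and the four scalar products $(\epsilon_1^{(n)},Q)$, $(\epsilon_1^{(n)},S_{1,0})$, $(\epsilon_1^{(n)},S_{0,1})$, $(\epsilon_2^{(n)},\rho_1)$ all tend to $0$. Since $Q \in L^\infty$ and $\|\epsilon^{(n)}\|_2 = 1$, the potential pieces $\int Q^{2/3}|\epsilon_i^{(n)}|^2$ are uniformly bounded, so the localized kinetic pieces $\int_0^\infty\sqrt{s}\int\Delta\phi_{A_n}|\nabla(\epsilon_i^{(n)})_s|^2\,dx\,ds$ are uniformly bounded as well. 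Applying Lemma \ref{app-lemma-c-1-3D}, after extraction I would have $\epsilon^{(n)} \rightharpoonup \epsilon^\star$ weakly in $L^2$, strongly in $L^2_{\mathrm{loc}}$, with $\epsilon^\star \in H^{1/2}(\mathbb{R}^3)$.

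Next I would transfer the four orthogonality conditions to $\epsilon^\star$ by weak $L^2$-testing against the fixed functions $Q, S_{1,0}, S_{0,1}, \rho_1 \in L^2$. Strong $L^2_{\mathrm{loc}}$-convergence together with the decay $Q(x) \lesssim \langle x\rangle^{-4}$ shows that the potential parts pass to the limit, while the lower-semicontinuity part of Lemma \ref{app-lemma-c-1-3D} applied to the kinetic pieces — combined with the pointwise limit $\Delta\phi(x/A_n) \to \Delta\phi(0) > 0$ — yields, up to a positive multiplicative constant, the bound $(L_+\epsilon_1^\star,\epsilon_1^\star) + (L_-\epsilon_2^\star,\epsilon_2^\star) \leq 0$. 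If $\epsilon^\star \not\equiv 0$, I would invoke the spectral characterization $\ker L_+ = \mathrm{span}\{\partial_{x_j}Q\}$, $\ker L_- = \mathrm{span}\{Q\}$ from \cite{FrankLS2016} together with the standard $H^{1/2}$-coercivity of $L_\pm$ modulo their kernels and the generalized null directions $\Lambda Q$ and $\nabla Q$: the four orthogonality conditions inherited by $\epsilon^\star$ are exactly those selected in Section \ref{section-mod-estimate-3D} to annihilate every degeneracy direction in the radial class, so they force $\epsilon^\star \equiv 0$, a contradiction. If instead $\epsilon^\star \equiv 0$, then $L^2_{\mathrm{loc}}$-strong convergence and the decay of $Q^{2/3}$ give $\int Q^{2/3}|\epsilon^{(n)}|^2 \to 0$, and the total form is bounded below by $\|\epsilon^{(n)}\|_2^2 + o(1) = 1 + o(1)$, again contradicting the vanishing assumption.

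The main obstacle I expect is the careful passage to the limit in the localized kinetic term, since the weight $\Delta\phi(x/A_n)$ converges only pointwise and lacks a uniform lower bound on bounded sets; producing a clean lower-semicontinuity statement with the correct constant is precisely the role of Lemma \ref{app-lemma-c-1-3D}. Once that input is in place, the remaining work reduces to the kernel analysis of $L_\pm$ in the radial class, which is routine thanks to \cite{FrankLS2016}, together with the check that the four listed directions exhaust the obstruction to coercivity.
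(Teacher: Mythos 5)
Your compactness--contradiction scheme (normalize $\|\epsilon^{(n)}\|_2=1$, extract $A_n\to\infty$, use Lemma \ref{app-lemma-c-1-3D} for the weak limit and lower semicontinuity of the localized kinetic form, split into the cases $\epsilon^\star\equiv 0$ and $\epsilon^\star\not\equiv 0$, and in the latter invoke the unlocalized coercivity of Lemma \ref{lemma-app-coercivity-estimate-3D}) is exactly the standard argument from \cite{KLR2013} to which the paper defers, the paper itself omitting all details. The proposal is correct and takes essentially the same route.
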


\begin{lemma}\label{lemma-app-c-3-3D}
For any $u\in L^2(\mathbb{R}^3)$, we have the bound
\begin{align}\notag
\left|\int_{s=0}^{+\infty}\sqrt{s}\int\Delta^2\phi_{A}|u_s|^2dxds\right|
\leq\frac{1}{A}\|u\|_2^2.
\end{align}
\end{lemma}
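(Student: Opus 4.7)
The plan is to pass to Fourier variables, evaluate the $s$-integral in closed form, and then bound the resulting bilinear form via Schur's test combined with the scaling $\phi_A(x)=\phi(x/A)$.

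First, since $\widehat{u_s}(\xi)=\sqrt{2/\pi}\,\widehat{u}(\xi)/(|\xi|^2+s)$, Plancherel and Fubini rewrite the left-hand side as
\begin{align*}
\int_0^{+\infty}\!\sqrt{s}\int_{\mathbb R^3}\Delta^2\phi_A\,|u_s|^2\,dx\,ds
 = c\iint_{\mathbb R^3\times\mathbb R^3} \widehat{u}(\xi)\,\overline{\widehat{u}(\eta)}\,\widehat{\Delta^2\phi_A}(\eta-\xi)\,\mathcal{I}(\xi,\eta)\,d\xi\,d\eta,
\end{align*}
where $\mathcal{I}(\xi,\eta)=\int_0^\infty \sqrt{s}\,[(|\xi|^2+s)(|\eta|^2+s)]^{-1}\,ds$. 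Substituting $s=t^2$ followed by partial fractions produces the elementary identity $\mathcal{I}(\xi,\eta)=\pi/(|\xi|+|\eta|)$, which is exactly what we need to make the remaining kernel integrable in $\eta$.

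Second, I estimate the resulting bilinear form with kernel $\widehat{\Delta^2\phi_A}(\eta-\xi)/(|\xi|+|\eta|)$ by Schur's test. Using $|\xi|+|\eta|\geq|\xi-\eta|$ and the change of variables $\tau=\eta-\xi$,
$$\sup_{\xi\in\mathbb R^3}\int_{\mathbb R^3} \frac{|\widehat{\Delta^2\phi_A}(\eta-\xi)|}{|\xi|+|\eta|}\,d\eta \;\leq\; \int_{\mathbb R^3} \frac{|\widehat{\Delta^2\phi_A}(\tau)|}{|\tau|}\,d\tau.$$
The scaling relation $\Delta^2\phi_A(x)=A^{-4}(\Delta^2\phi)(x/A)$ gives $\widehat{\Delta^2\phi_A}(\tau)=A^{-1}\widehat{\Delta^2\phi}(A\tau)$, so substituting $\sigma=A\tau$ yields $\int|\widehat{\Delta^2\phi_A}(\tau)|/|\tau|\,d\tau = A^{-3}\int|\widehat{\Delta^2\phi}(\sigma)|/|\sigma|\,d\sigma$. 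Combined with the Schur bound this produces $|I|\lesssim A^{-3}\|u\|_2^2$, which is even stronger than the claimed $A^{-1}\|u\|_2^2$ for $A\geq 1$.

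The one non-routine point, and the main obstacle, is verifying $\int_{\mathbb R^3}|\widehat{\Delta^2\phi}(\sigma)|/|\sigma|\,d\sigma<\infty$. Near $\sigma=0$ the weight $|\sigma|^{-1}$ is locally integrable in $\mathbb R^3$, so only the behavior at $|\sigma|\to\infty$ is delicate. From \eqref{energy-cutoff-function-3D} one has $\phi'(r)=3-e^{-r}$ for $r\geq 2$, so $\Delta\phi(x)$ equals the harmonic function $6/|x|$ plus an exponentially-decaying correction on that region; a second Laplacian annihilates $6/|x|$ and leaves a smooth remainder with exponential decay. Hence $\Delta^2\phi$ is Schwartz-class on $\mathbb R^3$, its Fourier transform is bounded and rapidly decreasing, and the integral converges. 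This structural input (the particular choice of exponential tail of $\phi$) is exactly what makes the scaling factor $A^{-3}$ available and delivers the bound.
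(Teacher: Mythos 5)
Your proof is correct, and it is essentially self-contained where the paper is not: the paper gives no argument for this lemma at all, deferring to ``standard arguments'' in Krieger--Lenzmann--Rapha\"el and the 2D companion paper, where the analogous estimate is likewise obtained from the resolvent representation of $u_s$. Your chain of steps checks out: $\widehat{u_s}=\sqrt{2/\pi}\,\widehat{u}/(|\xi|^2+s)$, the $s$-integral does evaluate to $\pi/(|\xi|+|\eta|)$ (and the degenerate case $|\xi|=|\eta|$ is consistent), the triangle inequality $|\xi|+|\eta|\ge|\xi-\eta|$ plus Schur's test reduces everything to $\int |\widehat{\Delta^2\phi_A}(\tau)|\,|\tau|^{-1}d\tau$, and the scaling $\widehat{\Delta^2\phi_A}(\tau)=A^{-1}\widehat{\Delta^2\phi}(A\tau)$ indeed yields the factor $A^{-3}$; moreover Tonelli applied to the very same kernel justifies the interchange of the $s$- and frequency-integrals, so the formal manipulations are legitimate for general $u\in L^2$. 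Your verification that $\Delta^2\phi$ is rapidly decreasing is also the right structural point: on $r\ge 2$ one has $\Delta\phi=6/r+e^{-r}(1-2/r)$, the harmonic part is annihilated by the second Laplacian, and on $r\le 1$ one has $\Delta\phi\equiv 3$, so $\Delta^2\phi$ is smooth with exponential decay and its Fourier transform is integrable against $|\sigma|^{-1}$ (here the dimension $3$ helps at $\sigma=0$, where no vanishing of $\widehat{\Delta^2\phi}$ is needed, unlike the 1D analogue). What your argument buys beyond the paper's statement is the sharper decay $\mathcal{O}(A^{-3})\|u\|_2^2$; the only caveat is cosmetic: your bound carries an implicit constant, so the literal inequality with constant $1$ and prefactor $A^{-1}$ is recovered only for $A$ sufficiently large, which is exactly the standing assumption made at the start of the paper's appendix and the way the lemma is used in the coercivity step.
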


\begin{lemma}\label{lemma-app-coercivity-estimate-3D}
Let $L_{+}\epsilon_1$ and $L_{-}\epsilon_2$ be the  defined as section \ref{section2-3D}, respectively. Then there exist a constant $C_0>0$ and $A_0>0$ such that, for all $A\geq A_0$ and all $\epsilon=\epsilon_1+i\epsilon_2\in H^{1/2}(\mathbb{R}^3)$, we have the coercivity estimate
\begin{align}\notag
(L_{+}\epsilon_1,\epsilon_1)+(L_{-}\epsilon_2,\epsilon_2)\geq& C_0\int|\epsilon|^2-\frac{1}{C_0}\Big\{(\epsilon_1,Q)^2\notag\\&+(\epsilon_1,S_{1,0})^2
+|(\epsilon_1,S_{0,1})|^2+|(\epsilon_2,\rho_1)|^2\Big\}.
\end{align}
Here $S_{1,0}$ and $S_{0,1}$ are the unique functions such that $L_{-}S_{1,0}=\Lambda Q$ with $(S_{1,0}, Q)=0$ and $L_{-}S_{0,1}=-\nabla Q$ with $(S_{0,1},Q)=0$, respectively, and the function $\rho_1$ is defined in \eqref{mod-definition-rho-3D}.
\end{lemma}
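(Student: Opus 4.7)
The plan is a compactness-contradiction argument on the whole space $H^{1/2}(\mathbb{R}^3)$, which is the cleaner ``un-localized'' analogue of Lemma \ref{lemma-app-c-2-3D}: no cut-off $\phi_A$ is needed, and consequently no appeal to the weak-compactness substitute in Lemma \ref{app-lemma-c-1-3D} is required. First I would assume the claim fails, so that there exists a sequence $\{\epsilon^n = \epsilon_1^n + i\epsilon_2^n\} \subset H^{1/2}(\mathbb{R}^3)$ normalized by $\|\epsilon^n\|_2 = 1$ and satisfying
$$(L_+\epsilon_1^n,\epsilon_1^n)+(L_-\epsilon_2^n,\epsilon_2^n)+n\bigl[(\epsilon_1^n,Q)^2+(\epsilon_1^n,S_{1,0})^2+|(\epsilon_1^n,S_{0,1})|^2+|(\epsilon_2^n,\rho_1)|^2\bigr]<\tfrac1n.$$
Writing $L_\pm = D+1-V_\pm$ with the smooth, fast-decaying potentials $V_+=\tfrac{5}{3}Q^{2/3}$ and $V_-=Q^{2/3}$, and using fractional Sobolev/Hardy bounds to absorb $\int V_\pm|\epsilon^n|^2$ by a small multiple of $\|\epsilon^n\|_{H^{1/2}}^2$, I would deduce that $\{\epsilon^n\}$ is uniformly bounded in $H^{1/2}$.

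Next I would pass to a subsequence so that $\epsilon^n\rightharpoonup\epsilon^\infty$ weakly in $H^{1/2}$ and strongly in $L^2_{\mathrm{loc}}$. Because $Q, S_{1,0}, S_{0,1}, \rho_1$ belong to $L^2(\mathbb{R}^3)$ with strong spatial decay, the four orthogonality penalties pass to the limit and force $(\epsilon_1^\infty, Q) = (\epsilon_1^\infty, S_{1,0}) = (\epsilon_1^\infty, S_{0,1}) = (\epsilon_2^\infty, \rho_1) = 0$. By weak lower semicontinuity of the positive part $\|D^{1/2}\cdot\|_2^2+\|\cdot\|_2^2$ and strong $L^2_{\mathrm{loc}}$ convergence applied to the compact perturbations $\int V_\pm|\epsilon^n|^2$ (leveraging the decay of $Q$), this yields $(L_+\epsilon_1^\infty,\epsilon_1^\infty)+(L_-\epsilon_2^\infty,\epsilon_2^\infty)\leq 0$.

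The core step is then the spectral analysis of $(L_+,L_-)$ invoked from \cite{FrankLS2016}: $L_-\geq 0$ with $\ker L_-=\{Q\}$, and $L_+$ has exactly one simple negative eigenvalue with $\ker L_+=\mathrm{span}\{\partial_{x_j}Q\}_{j=1}^3$. Using the algebraic identities $L_+\Lambda Q=-Q$, $L_+\nabla Q=0$, $L_-S_{1,0}=\Lambda Q$, $L_-S_{0,1}=-\nabla Q$ recorded in Section \ref{section2-3D}, one checks the non-degeneracy $(\Lambda Q, S_{1,0})=(L_-S_{1,0},S_{1,0})=2e_1>0$ together with $(Q,\rho_1)\neq 0$, and a Weinstein-type splitting then shows that on the subspace cut out by the four orthogonality conditions the form $(L_+\cdot,\cdot)+(L_-\cdot,\cdot)$ is strictly positive away from $0$. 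Hence $\epsilon^\infty=0$.

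Finally, with $\epsilon^\infty=0$, strong $L^2_{\mathrm{loc}}$ convergence together with the rapid decay of $Q$ forces $\int V_\pm|\epsilon^n|^2\to 0$, so that the non-negative piece
$$\|D^{1/2}\epsilon^n\|_2^2+\|\epsilon^n\|_2^2=(L_+\epsilon_1^n,\epsilon_1^n)+(L_-\epsilon_2^n,\epsilon_2^n)+\int V_+|\epsilon_1^n|^2+\int V_-|\epsilon_2^n|^2\to 0,$$
contradicting $\|\epsilon^n\|_2=1$. The main obstacle is the spectral step: one must verify rigorously that the four specified orthogonality conditions simultaneously suppress $\ker L_-$, the unique negative direction of $L_+$, and the three-dimensional translation kernel $\ker L_+=\mathrm{span}\{\partial_{x_j}Q\}$. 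This is the $3$D counterpart of the argument carried out in \cite{KLR2013,Georgiev-Li}; the only extra bookkeeping lies in eliminating the vector translation kernel via the vector-valued $S_{0,1}$, while the suppression of the negative eigenvalue via $S_{1,0}$ proceeds exactly as in those references.
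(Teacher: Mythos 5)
Your compactness–contradiction scheme is exactly the ``standard argument'' the paper defers to via its citation of \cite{KLR2013,Georgiev-Li}, and the overall structure (normalize, extract a weak $H^{1/2}$ limit using the $L^2$--boundedness of the potentials $V_\pm$, pass the penalties and the form to the limit, show the limit vanishes, contradict $\|\epsilon^n\|_2\equiv 1$) is the intended one. Two points need tightening. First, in the last step you only get $\limsup\bigl[(L_+\epsilon_1^n,\epsilon_1^n)+(L_-\epsilon_2^n,\epsilon_2^n)\bigr]\le 0$, not convergence to $0$; this still suffices, since together with $\int V_\pm|\epsilon^n|^2\to 0$ it gives $\limsup\bigl(\|D^{1/2}\epsilon^n\|_2^2+\|\epsilon^n\|_2^2\bigr)\le 0$, contradicting $\|\epsilon^n\|_2\equiv 1$. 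Second, and more substantively, your spectral bookkeeping (``$\ker L_-$, the unique negative direction of $L_+$, and the translation kernel'') omits the mass-critical degeneracy: since $L_+\Lambda Q=-Q$ and $(\Lambda Q,Q)=0$, the form $(L_+\cdot,\cdot)$ is merely nonnegative on $\{Q\}^\perp\cap(\ker L_+)^\perp$ with an additional null direction along $\Lambda Q$. It is this null direction, not the negative eigendirection, that $(\epsilon_1,S_{1,0})=0$ eliminates via $(\Lambda Q,S_{1,0})=2e_1\ne 0$; the negative eigendirection is already controlled by $(\epsilon_1,Q)=0$ through the variational characterization of $Q$, so your closing sentence attributing it to $S_{1,0}$ reverses the roles. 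The translation kernel is handled by $(\epsilon_1,S_{0,1})=0$ via $(\nabla Q,S_{0,1})=-(L_-S_{0,1},S_{0,1})\ne 0$, and the cross Gram entries $(\Lambda Q,S_{0,1})$, $(\nabla Q,S_{1,0})$ vanish by the parities listed in Remark~\ref{remark-3D}, so the $4\times4$ Gram matrix of $\{\Lambda Q,\nabla Q\}$ against $\{S_{1,0},S_{0,1}\}$ is block-diagonal and invertible. With these corrections the spectral step closes and the proof is complete.
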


\textbf{Acknowledgements}

VG is partially supported by Project 2017``Problemi stazionari e di evoluzione nelle equazioni di campo nonlineari" of INDAM, GNAMPA - Gruppo Nazionale per l'Analisi Matematica, la Probabilita e le loro Applicazioni, by Institute of Mathematics and Informatics, Bulgarian Academy of Sciences, by Top Global University Project, Waseda University and the Project PRA 2018 49 of University of Pisa. YL was supported by the China Scholarship Council (201906180041).

\end{document}